\def\l@section{\@tocline{1}{10pt}{1pc}{}{}}
\def\l@subsection{\@tocline{2}{0pt}{1pc}{4.6em}{}}
\def\l@subsubsection{\@tocline{3}{0pt}{1pc}{7.6em}{}}
\renewcommand{\tocsection}[3]{%
  \indentlabel{\@ifnotempty{#2}{\makebox[2.3em][l]{%
    \ignorespaces#1 #2.\hfill}}}\textbf{#3}}
\renewcommand{\tocsubsection}[3]{%
  \indentlabel{\@ifnotempty{#2}{\hspace*{2.3em}\makebox[2.3em][l]{%
    \ignorespaces#1 #2.\hfill}}}#3}
\renewcommand{\tocsubsubsection}[3]{%
  \indentlabel{\@ifnotempty{#2}{\hspace*{4.6em}\makebox[3em][l]{%
    \ignorespaces#1 #2.\hfill}}}#3}
\newlist{condenum}{enumerate}{1} % 'condenum': a new, enumerate-like list env.
\setlist[condenum]{label=\bfseries Condition \arabic*.,  ref=\arabic*, wide}
\numberwithin{equation}{section}
\theoremstyle{plain}
\def\ps@pprintTitle{%
 \let\@oddhead\@empty
 \let\@evenhead\@empty
 \def\@oddfoot{}%
 \let\@evenfoot\@oddfoot}
\newcommand{\R}{\mathbb{R}}
\newcommand{\RR}{\mathbb{R}^2}
\newcommand{\Rm}{\textnormal{Rm}}
\newcommand{\Ric}{\textnormal{Ric}}
\newcommand{\rii}{\rightarrow\infty}
\newcommand{\ri}{\rightarrow}
\newcommand{\sy}{\mathbb{Z}_2\times O(2)}
\newcommand{\cigar}{\textnormal{Cigar}}
\numberwithin{equation}{section}
\newtheorem{theorem}{Theorem}[section]
\newtheorem{lem}[theorem]{Lemma}
\newtheorem{prop}[theorem]{Proposition}
\theoremstyle{definition}
\newtheorem{defn}[theorem]{Definition}
\newtheorem*{theorem*}{Theorem}
\xpatchcmd{\tableofcontents}{\contentsname \@mkboth}{\small\contentsname \@mkboth}{}{}
\xpatchcmd{\listoffigures}{\chapter *{\listfigurename }}{\chapter *{\small\listfigurename }}{}{}
\def\blfootnote{\xdef\@thefnmark{}\@footnotetext}
\begin{document}

\begin{abstract}
For every $\theta\in(0,\pi)$, we construct a 3D steady gradient Ricci soliton whose asymptotic cone is a sector with angle $\theta$, which is a called 3D flying wing.
\end{abstract}

\blfootnote{The author was supported by the NSF grant DMS-2203310.}

\title[3D flying wings for any asymptotic cones]{3D flying wings for any asymptotic cones}

\author[Yi Lai]{Yi Lai}
\email{yilai@stanford.edu}
\address[]{Department of Mathematics, Stanford University, CA 94305, USA}

\maketitle

\section{Introduction}
Ricci solitons are self-similar solutions of the Ricci flow equation. They play an important role in the study of Ricci flow and they often arise as singularity models.
In particular, a steady gradient Ricci soliton is a smooth complete Riemannian manifold $(M,g)$ together with a smooth function $f$ on $M$ which satisfy
\begin{equation}
    \Ric=\nabla^2 f.
\end{equation}
where $f$ is called a potential function.
The soliton generates a Ricci flow for all time by $g(t)=\phi_{-t}^*(g)$, where $\{\phi_t\}_{t\in(-\infty,\infty)}$ is the one-parameter group of diffeomorphisms generated by $\nabla f$ with $\phi_0$ the identity.

In dimension 2, the only non-flat steady gradient Ricci soliton is Hamilton's cigar soliton, which is rotationally symmetric \cite{cigar}.
In dimension $3$, the only non-flat rotationally symmetric steady gradient Ricci soliton is the Bryant soliton \cite{bryant}.
Another example is the product of $\R$ and the cigar soliton.
Note that the asymptotic cones of the Bryant soliton and $\R\times\cigar$ are a ray and a half-plane respectively.

Therefore, Hamilton conjectured that there exist 3d steady gradient Ricci solitons whose asymptotic cones are sectors with angles $\theta\in(0,\pi)$, which are called flying wings \cite{CaoHD,infinitesimal,HaRF,Catino,Chow2007a,DZ}.
In \cite{Lai2020_flying_wing}, the author confirmed this conjecture by constructing a family of $\sy$-symmetric 3D flying wings, which are parametrized by the ratio of Ricci curvature eigenvalues at a point.
However, it remains unknown whether the asymptotic cone angles of these flying wings can take all values in $(0,\pi)$.

Our first theorem gives a positive answer to this question, and thus completes the discussion of the existence of 3D flying wings.
Throughout the paper, the quadruple $(M,g,f,p)$ denotes a steady gradient Ricci soliton, where $f$ is the potential function and $p$ is a critical point of $f$.
\begin{theorem}[Existence]\label{t: flying wing with prescribed angles}
For any $\theta\in(0,\pi)$, there exists a $\mathbb{Z}_2\times O(2)$-symmetric 3D flying wing $(M,g,f,p)$ which is asymptotic to a sector with angle $\theta$.
\end{theorem}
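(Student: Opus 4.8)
The plan is to deduce Theorem~\ref{t: flying wing with prescribed angles} from the family of $\sy$-symmetric $3$D flying wings $\{(M_\alpha,g_\alpha,f_\alpha,p_\alpha)\}$ constructed in \cite{Lai2020_flying_wing}, indexed by the ratio $\alpha\in(0,1)$ of the two distinct Ricci eigenvalues of $g_\alpha$ at the critical point $p_\alpha$, normalized so that $\scalar(p_\alpha)=1$. Let $\theta(\alpha)\in(0,\pi)$ denote the angle of the asymptotic sector of $(M_\alpha,g_\alpha)$. It then suffices to show that $\alpha\mapsto\theta(\alpha)$ is continuous and that $\theta(\alpha)\to 0$ as $\alpha\to 1$ while $\theta(\alpha)\to\pi$ as $\alpha\to 0$: the intermediate value theorem on the connected interval $(0,1)$ produces, for each prescribed $\theta\in(0,\pi)$, a parameter $\alpha$ with $\theta(\alpha)=\theta$, and $(M_\alpha,g_\alpha,f_\alpha,p_\alpha)$ is the desired soliton. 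The first ingredient is compactness of the family. From the steady soliton identity $\scalar+|\nabla f|^2\equiv \scalar(p_\alpha)$ the normalization gives $0\le \scalar\le 1$, and since the $M_\alpha$ have nonnegative curvature operator this yields a uniform bound $|\Rm|\le C$; combined with the control of the geometry near $p_\alpha$ from \cite{Lai2020_flying_wing} and Hamilton's compactness theorem, any sequence $\alpha_i\to\alpha_\infty$ has a subsequence along which $(M_{\alpha_i},g_{\alpha_i},f_{\alpha_i},p_{\alpha_i})$ converges in the pointed smooth sense to a non-flat $\sy$-symmetric steady gradient Ricci soliton $(M_\infty,g_\infty,f_\infty,p_\infty)$ whose Ricci eigenvalue ratio at $p_\infty$ equals $\alpha_\infty$; for $\alpha_\infty\in(0,1)$ this limit lies in the family.

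The \emph{crucial} step is to upgrade this convergence of solitons to convergence of their asymptotic cones, i.e.\ continuity of $\theta$. Pointed smooth convergence does not by itself control an invariant ``at infinity,'' so what is needed is a uniform-in-$\alpha$ estimate on the rate at which $g_\alpha$ opens up to its sector cone --- equivalently, a description of $\theta(\alpha)$ in terms of data on a fixed bounded region (for instance the asymptotic opening between the two ``edge'' rays of the flying wing, or the asymptotics of the level sets of $f_\alpha$), controlled uniformly by means of the curvature decay and the soliton identities. Granting such an estimate, if $\theta(\alpha_i)\to\theta_\infty$ along the convergent subsequence above then $\theta_\infty$ must be the cone angle of $(M_\infty,g_\infty)$; taking $\alpha_\infty\in(0,1)$ gives continuity on $(0,1)$, and taking $\alpha_\infty\in\{0,1\}$ reduces the two one-sided limits to identifying the endpoint limit solitons.

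Those are identified by rigidity. When $\alpha\to 1$ the two Ricci eigenvalues at $p_\infty$ coincide, so $\Ric(p_\infty)=\tfrac13 g_\infty$; together with nonnegative curvature operator and the $\sy$-symmetry this forces $(M_\infty,g_\infty)$ to be rotationally symmetric, hence the Bryant soliton \cite{bryant}, whose asymptotic cone is a ray, so $\theta(\alpha)\to 0$. When $\alpha\to 0$ one Ricci eigenvalue at $p_\infty$ vanishes; by the strong maximum principle the kernel of $\Ric$ is then a parallel line field, so $(M_\infty,g_\infty)$ splits isometrically as $\R\times N$ with $N$ a non-flat $2$-dimensional steady gradient Ricci soliton, i.e.\ $N=\cigar$ \cite{cigar}, whose product with $\R$ has a half-plane as asymptotic cone, so $\theta(\alpha)\to\pi$. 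Hence $\theta$ extends continuously to $[0,1]$ with $\theta(0)=\pi$ and $\theta(1)=0$, and the intermediate value theorem finishes the proof.

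The step I expect to be the main obstacle is exactly the uniform asymptotic-cone estimate underlying continuity: one must show that $g_\alpha$ converges to its asymptotic sector at a rate controlled uniformly as $\alpha$ ranges over $(0,1)$, including as $\alpha\to 0,1$, so that no ``angle'' is lost in the limits. This demands a genuine analysis of the geometry of the flying wings near spatial infinity --- curvature decay, the behavior of $f_\alpha$ and its level sets, and the structure of the edge rays --- which goes well beyond the soft Cheeger--Gromov compactness that handles the region near $p_\alpha$.
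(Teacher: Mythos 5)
Your outline correctly identifies where the difficulty lies, but the proposal does not actually contain the argument for that step: the phrase ``Granting such an estimate'' is doing all the work. Pointed smooth convergence of solitons gives no control whatsoever on the asymptotic cone angle --- a priori the angle can drop in the limit (indeed, a sequence of flying wings can converge to the Bryant soliton, whose cone is a ray, no matter what their angles are). The entire content of the theorem is a uniform-at-infinity estimate that rules this out, and you have named it as an obstacle rather than proved it. The paper supplies exactly this missing ingredient in two pieces: Lemma~\ref{l: quantitative relation} converts the cone angle into the quantity $\lim_{s\rii}R(\Gamma(s))=R(p)\sin^2\frac{\alpha}{2}$, so that controlling the angle becomes controlling the scalar curvature along the fixed geodesic $\Gamma$; and Proposition~\ref{l: R does not change too much} shows that if $R(\Gamma(s_0))\ge R_{\#}$ for some $s_0$ larger than a threshold $D(R_{\#},\epsilon)$ that is \emph{uniform over all such solitons}, then $R(\Gamma(s))$ decreases by at most a factor $1-\epsilon$ on $[s_0,\infty)$. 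This uniformity (which rests on the uniform dimension reduction of Lemma~\ref{l: looks like RxCigar}, the existence of critical points, and ODE estimates for the warping function) is what lets one read off the limit of $R$ at infinity from data on a bounded region, and it is precisely the analysis your last paragraph says ``goes well beyond'' soft compactness. Without it your proof is an outline, not a proof.

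There are also two secondary problems with the specific route you chose. First, your continuity argument for $\alpha\mapsto\theta(\alpha)$ tacitly assumes that the eigenvalue ratio at the critical point determines the soliton (so that the subsequential limit with ratio $\alpha_\infty$ ``lies in the family'' and has angle $\theta(\alpha_\infty)$); uniqueness of 3D flying wings is open, so $\theta$ need not even be a well-defined function of $\alpha$. The paper avoids this by never parametrizing by the eigenvalue ratio: it runs the intermediate value theorem inside the approximating families of \emph{expanding} solitons $(M_{i,\mu},g_{i,\mu})$, prescribing the value $R(\Gamma(s_k))=R_0=\sin^2\frac{\theta}{2}$ at points $s_k\rii$ chosen via Proposition~\ref{l: R does not change too much}, and then takes a diagonal limit in which $\lim_{s\rii}R(\Gamma(s))=R_0$ is forced. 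Second, your endpoint rigidity at $\alpha\to1$ is unjustified: equality of the Ricci eigenvalues at the single point $p_\infty$ does not by itself force rotational symmetry of the whole soliton, so the claim $\theta(\alpha)\to0$ as $\alpha\to1$ would need a separate argument. (The splitting argument at $\alpha\to0$ via the strong maximum principle is fine.)
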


With Theorem \ref{t: flying wing with prescribed angles} proved now, the classification of all 3D steady gradient Ricci solitons remains to prove the uniqueness. More precisely, it remains to see whether the soliton is unique for each asymptotic cone angle $\theta\in[0,\pi]$. For $\theta=0$, the uniqueness of the Bryant soliton as the only one asymptotic to a ray was confirmed very recently by the author in \cite[Theorem 1.1]{Lai2022_O(2)}. For $\theta=\pi$, it is easy to see that the soliton must be isometric to $\R\times\cigar$.
So the uniqueness question is reduced to that of the 3D flying wings for each $\theta\in(0,\pi)$, which all satisfy the $O(2)$-symmetry due to the author \cite[Theorem 1.2]{Lai2022_O(2)}.

We also remark that
in the mean curvature flow, 
the analogs of 3D steady gradient Ricci solitons on $\R^3$ are convex translators in $\R^3$, where the term flying wing denotes translators that are graphs over finite slabs in $\R^2$.
It has been proved that for each $\theta\in(0,\pi)$, there exists a unique mean curvature flow flying wings in $\R^3$ asymptotic to $\theta$ \cite{Wangxujia,Spruck2020CompleteTS,white,langford1}.

Let $\mathcal{M}$ be the space of all $\mathbb{Z}_2\times O(2)$-symmetric 3D steady gradient Ricci solitons on $\R^3$ that are pointed at the critical point where the scalar curvature $R$ is equal to $1$, and consider the map $\tau:\mathcal{M}\ri[0,\pi]$ from the solitons to their asymptotic cone angles.
Then
Theorem \ref{t: flying wing with prescribed angles} shows that $\tau$ is surjective. Moreover, our second theorem shows that the space $\mathcal{M}$ is subsequentially compact under the smooth topology, and the map $\tau$ is continuous.
\begin{theorem}[Compactness]\label{t: theorem compactness}
Let $\{(M_i,g_i,f_i,p_i)\}_{i=1}^{\infty}$ be a sequence of $\sy$-symmetric 3D steady gradient Ricci solitons with asymptotic cone angles $\alpha_i\in[0,\pi]$, and $R(p_i)=1$.
Then there exists a subsequence smoothly converging to a 3D steady gradient Ricci soliton.

Moreover,
suppose $\lim_{i\rii}\alpha_i=\alpha$. Then any subsequential limit of $(M_i,g_i,f_i,p_i)$ has asymptotic cone angle equal to $\alpha$.
In particular, the limit is isometric to the Bryant soliton when $\alpha=0$, and to $\R\times\cigar$ when $\alpha=\pi$.
\end{theorem}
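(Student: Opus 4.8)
The plan is to combine a Cheeger--Gromov compactness argument (for the first assertion) with a semicontinuity analysis of the asymptotic cone angle carried out on the $2$-dimensional quotient (for the second).

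First I would establish uniform geometric bounds along the sequence. On each soliton the identity $R+|\nabla f_i|^2\equiv R(p_i)=1$ gives $0<R\le 1$ and $|\nabla f_i|\le 1$, and since every $3$D steady gradient Ricci soliton has nonnegative sectional curvature the curvature operator is pinched, $0\le\Rm_{g_i}\le C_0$, with $C_0$ universal; Shi's local derivative estimates applied to the eternal Ricci flow $g_i(t)=\phi_{-t}^*g_i$ (which has $|\Rm|\le C_0$ for all $t$) then give $|\nabla^m\Rm_{g_i}|\le C_m$ uniformly, and $|\nabla R|=2|\Ric(\nabla f_i,\cdot)|\le C_1$. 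For a uniform injectivity radius bound $\mathrm{inj}_{g_i}(p_i)\ge\iota_0>0$ I would use the symmetry: $p_i$ is the group's fixed point, the orbit space $(\Sigma_i,\bar g_i)$ is a simply connected surface with $0\le K_{\bar g_i}\le C_0'$ hence has no short geodesic loop (Klingenberg, using that a nonnegatively curved noncompact simply connected surface carries no closed geodesic) and is noncollapsed at $\bar p_i$, and the bounded geometry prevents the $SO(2)$--orbits near $p_i$ from being too short. With these bounds, Cheeger--Gromov compactness extracts a subsequence $(M_i,g_i,p_i)\to(M_\infty,g_\infty,p_\infty)$ in the pointed $C^\infty$ topology; normalizing $f_i(p_i)=0$, the bounds on $|\nabla f_i|$, $\nabla^2 f_i=\Ric_{g_i}$ and their derivatives give $f_i\to f_\infty$ in $C^\infty_{loc}$, and passing to the limit in $\Ric_{g_i}=\nabla^2 f_i$, $\nabla f_i(p_i)=0$, $R(p_i)=1$ shows $(M_\infty,g_\infty,f_\infty,p_\infty)$ is a non-flat $\sy$-symmetric $3$D steady gradient Ricci soliton. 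This proves the first assertion.

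Next, to handle the cone angle, I would recall from \cite{Lai2020_flying_wing,Lai2022_O(2)} that the asymptotic cone of a $\sy$-symmetric $3$D steady soliton is a (possibly degenerate) $2$-dimensional flat sector, whose angle $\tau(M)$ can be read off as the limit of a scale-monotone quantity on the nonnegatively curved orbit space $(\Sigma,\bar g)$ --- concretely, $q_M(\rho)$, the normalized length of the distance sphere of radius $\rho$ in $\Sigma$, which is nonincreasing in $\rho$ by Gauss--Bonnet, with $q_M(\rho)\searrow\tau(M)$; equivalently $\tau(M)=c-\int_\Sigma K_{\bar g}\,dA_{\bar g}$ for a universal constant $c$ and nonnegative integrand. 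Since the convergence above descends to $C^\infty_{loc}$ convergence $(\Sigma_i,\bar g_i,\bar p_i)\to(\Sigma_\infty,\bar g_\infty,\bar p_\infty)$ together with the warping functions, for each fixed $\rho$ we get $q_{M_i}(\rho)\to q_{M_\infty}(\rho)$ and $\int_{B(\bar p_i,\rho)}K_{\bar g_i}\to\int_{B(\bar p_\infty,\rho)}K_{\bar g_\infty}$; using $K_{\bar g_i}\ge 0$ and $\int_{\Sigma_i}K_{\bar g_i}=c-\alpha_i\to c-\alpha$ this yields $\int_{\Sigma_\infty}K_{\bar g_\infty}\le\liminf_i\int_{\Sigma_i}K_{\bar g_i}=c-\alpha$, i.e.\ $\tau(M_\infty)\ge\alpha$ (lower semicontinuity of $\tau$).

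The hard part will be the reverse inequality $\tau(M_\infty)\le\alpha$, i.e.\ ruling out loss of total curvature at infinity along the sequence: one must show the tails $\int_{\Sigma_i\setminus B(\bar p_i,\rho)}K_{\bar g_i}$ are bounded by some $\Psi(\rho)\to 0$ independent of $i$. I expect this to be the technical core of the theorem. The approach would be to prove a uniform estimate for the geometry of $\sy$-symmetric $3$D steady solitons near infinity --- a bound $\int_{\Sigma\setminus B(p,\rho)}K_{\bar g}\le\Psi(\rho)$ uniform over all such solitons (normalized by $R(p)=1$) --- either by a contradiction/compactness argument (if it failed, a definite amount of total curvature would sit at distances tending to infinity, and a suitably rescaled and recentred subsequential limit would be a $3$D steady gradient Ricci soliton that is flat yet carries positive scalar curvature, or otherwise contradicts the classification recalled above and the cited uniqueness results), or by a direct analysis of the radial ODE system satisfied by these solitons, in which the asymptotic angle is the limit of a monotone quantity trapped by an $i$-independent barrier. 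Granting such an estimate, $\int_{\Sigma_\infty}K_{\bar g_\infty}=\lim_i\int_{\Sigma_i}K_{\bar g_i}=c-\alpha$, hence $\tau(M_\infty)=\alpha$. Finally, for the endpoints: if $\alpha=0$ then $M_\infty$ is a non-flat $\sy$-symmetric (a fortiori $O(2)$-symmetric) $3$D steady soliton asymptotic to a ray, hence the Bryant soliton by \cite[Theorem 1.1]{Lai2022_O(2)}; and if $\alpha=\pi$ then the asymptotic cone of $M_\infty$ is a half-plane, so $M_\infty$ splits off a line and its $2$-dimensional factor is a non-flat steady soliton asymptotic to a ray, forcing $M_\infty$ to be isometric to $\R\times\cigar$ by Hamilton's classification in dimension $2$.
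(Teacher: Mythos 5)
Your first assertion is fine and essentially matches the paper's Lemma \ref{compactness to a steady soliton}: uniform curvature bounds from $R\le R(p_i)=1$ together with nonnegative curvature, a uniform injectivity radius bound (the paper gets this in one line from Gromoll--Meyer, $\mathrm{inj}\ge\pi/\sqrt{R_{\max}}$, rather than your orbit-space detour), Hamilton compactness, and $C^\infty_{loc}$ convergence of the normalized potentials.

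For the cone angle, however, there is a genuine gap, and it sits exactly where you flag it. Your Gauss--Bonnet framework on the quotient surface is a reasonable reformulation, and the lower semicontinuity $\tau(M_\infty)\ge\alpha$ is indeed the easy Fatou direction. But the reverse inequality --- the uniform tail estimate $\int_{\Sigma\setminus B(\bar p,\rho)}K\le\Psi(\rho)$, i.e., that no total curvature escapes to infinity along the sequence --- is precisely the content of the theorem, and you only posit it. Neither of your sketched strategies closes it as stated. The contradiction/compactness route fails at the key step: if a definite amount of curvature concentrates at points $\Gamma_i(s_i)$ with $s_i\to\infty$, recentering there and passing to a limit simply produces another non-flat 3D steady gradient soliton (this is exactly Lemma \ref{compactness to a steady soliton}); there is no flat-yet-curved absurdity and nothing in the classification is contradicted. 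What actually rules out the loss is the quantitative stability of $R$ along $\Gamma$, Proposition \ref{l: R does not change too much}: if $R(\Gamma(s_0))\ge R_0$ at a sufficiently large $s_0$ (depending only on $R_0,\epsilon$), then $R(\Gamma(s))\ge(1-\epsilon)R_0$ for all $|s|\ge s_0$. Proving this occupies most of Section 3 and requires the uniform dimension reduction to $\R\times\cigar$ (Lemma \ref{l: looks like RxCigar}, which in turn needs the existence of a critical point from \cite{Lai2022_O(2)}), the conversion of $R$ into the warping function $\varphi$, the coupled ODE estimates for $H(t)=d_g(\phi_t(x),\Gamma)$ and $h(t)=\varphi(\phi_t(x))$, and the bounded distance distortion of Lemma \ref{l: two points stay bounded}. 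Once that proposition is available, the paper's route to the angle statement is shorter than yours: it bypasses Gauss--Bonnet entirely and uses the asymptotic uniqueness relation $\lim_{s\to\infty}R(\Gamma(s))=\sin^2(\alpha/2)$ (Lemma \ref{l: quantitative relation}) to translate cone angles into the value of $R$ at infinity along $\Gamma$, which then passes to the limit. Your endpoint case $\alpha=\pi$ also needs repair: a half-plane asymptotic cone does not by itself split the manifold; in the paper this case follows because $\lim_{s\to\infty}R(\Gamma(s))=R(p)$ forces $|\nabla f|\equiv0$ on $\Gamma$, hence non-strictly-positive curvature and the identification with $\R\times\cigar$.
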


Note that if we assume the uniqueness of 3D flying wings for each asymptotic cone angle, then Theorem \ref{t: theorem compactness} will also imply the continuity of the inverse of the map $\tau$, and hence the continuity of the parametrization of 3D steady gradient Ricci solitons by their asymptotic cone angles.
Theorem \ref{t: flying wing with prescribed angles} and \ref{t: theorem compactness} can also be generalized to higher dimensional $O(n-2)\times O(2)$-symmetric flying wings for any $n\ge3$, see \cite{Lai2021Thesis} for the definition and construction.

The main ideas to prove Theorem \ref{t: flying wing with prescribed angles} are the following: By the asymptotic uniqueness theorem in \cite{Lai2020_flying_wing}, we know that the asymptotic cone angles are uniquely determined by the supremum of $R$ at infinity.
So we will construct a sequence of 3D flying wings with $R=R_0$ at a sequence of points going to infinity, for some fixed $R_0>0$. Then we will show that $\sup R$ on $M\setminus B_g(p,r)$ barely decreases for all sufficiently large $r$ in any 3D flying wings $(M,g,f,p)$. So the sequence of flying wings will converge to a flying wing in which $\limsup_{x\rii} R=R_0$. 

To analyze the asymptotic behavior of $R$, we need a dimension reduction theorem, which says that the geometry looks sufficiently like $\R\times\cigar$, if $R\ge R_0$ and the points are far enough away from $p$. 
Since $R$ and the warping function in $\R\times\cigar$ determine each other, we can reduce the asymptotic analysis of $R$ to that of the warping function, which can then be studied by the distance distortion estimates developed by the author in \cite{Lai2022_O(2)}.
A key ingredient to prove the uniform dimension reduction is the existence of a critical point in all 3D steady gradient Ricci solitons, which was proved in \cite[Theorem 1.4]{Lai2022_O(2)}.

This paper was written during the author’s visit at Beijing International Center for Mathematical Research in summer 2022, and she thanks Gang Tian and Xiaohua Zhu for conversations and their hospitality.
She also wants to thank Richard Bamler for comments.

\section{Preliminaries}
In this section, we introduce some useful notions and recall several results from \cite{Lai2020_flying_wing} and \cite{Lai2022_O(2)}.

First, to measure the closeness from the pointed manifolds to the smooth limits, we adopt the following notion of $\epsilon$-isometry and $\epsilon$-closeness.

\begin{defn}[$\epsilon$-isometry between manifolds]\label{d: epsilon isometry}
Let $\epsilon>0$ and $m\in\mathbb{N}$.
Let $(M^n_i,g_i)$, $i=1,2$, be an n-dimensional Riemannian manifolds, $x_i\in M_i$. We say a smooth map $\phi:B_{g_1}(x_1,\epsilon^{-1})\ri M_2$, $\phi(x_1)=x_2$, is an $\epsilon$-isometry in the $C^m$-norm if it is a diffeomorphism onto the image, and
\begin{equation*}
    |\nabla^{k}(\phi^*g_2-g_1)|\le \epsilon\quad \textit{on}\quad B_{g_1}(x_1,\epsilon^{-1}),\quad k=0,1,...,m,
\end{equation*}
where the covariant derivatives and norms are taken with respect to $g_1$.
We also say $(M_2,g_2,x_2)$ is $\epsilon$-close to $(M_1,g_1,x_1)$ in the $C^m$-norm. 
In particular,  if $m=[\epsilon^{-1}]$, then we simply say $(M_2,g_2,x_2)$ is $\epsilon$-close to $(M_1,g_1,x_1)$ and $\phi$ is an $\epsilon$-isometry. 
\end{defn}

In a non-negatively curved complete non-compact Riemannian manifold, we can equip a length metric on the space of geodesic rays. Moreover, a blow-down sequence of this manifold converges to the metric cone over the space of rays in the Gromov-Hausdorff sense, which we call the asymptotic cone of the manifold, see e.g. \cite[Prop 5.31]{MT}.

We know that 3D steady gradient Ricci solitons have non-negative sectional curvature, and a 3D steady gradient Ricci soliton is isometric to quotients of $\R\times\cigar$ if the curvature is not strictly positive everywhere. 
If the curvature is strictly positive, then the soliton is diffeomorphic to $\R^3$. 
By a simple adaptation of \cite[Lemma 4.2]{Lai2020_flying_wing} we see that the asymptotic cone of any 3D steady gradient Ricci soliton is isometric to a metric cone over an interval $[0,\alpha]$, for some $\alpha\in[0,\pi]$.

In \cite{Lai2022_O(2)} the author showed that all 3D steady gradient Ricci solitons $(M,g)$ are $O(2)$-symmetric. So there is a complete unit speed geodesic $\Gamma:(-\infty,\infty)\ri M$ fixed by the $O(2)$-action, $\Gamma(0)=p$, such that the metric on $M\setminus\Gamma$ is a warped product metric with $S^1$-fibers over a 2D totally geodesic submanifold.
Moreover, there is a quantitative relation between the asymptotic cone angle $\alpha$ and the limit of $R$ along $\Gamma$.

\begin{lem}[Asymptotic Uniqueness]
(\cite[Theorem 1.6]{Lai2022_O(2)})
\label{l: quantitative relation}
Let $(M,g,f,p)$ be a 3d steady gradient Ricci soliton on $\R^3$, whose asymptotic cone is a metric cone over the interval $[-\frac{\alpha}{2},\frac{\alpha}{2}]$ for some $\alpha\in[0,\pi]$.
Let $\Gamma:(-\infty,\infty)\ri M$ be the complete geodesic fixed by the $O(2)$-isometry, then
\begin{equation}
    \lim_{s\rii}R(\Gamma(s))=R(p)\sin^2\frac{\alpha}{2}.
\end{equation}
\end{lem}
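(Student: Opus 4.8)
Normalize $R_0:=R(p)$, so that the steady soliton identity reads $R+|\nabla f|^2\equiv R_0$ (the constant equals $R(p)$ since $\nabla f(p)=0$) and in particular $R_0=\sup_M R$. Because $f$ is $O(2)$-invariant, $\nabla f$ is tangent to the fixed locus $\Gamma$ all along $\Gamma$, so $u(s):=\langle\nabla f,\Gamma'\rangle(\gs)$ satisfies $u(0)=0$ and $u'(s)=\nabla^2 f(\Gamma',\Gamma')=\Ric(\Gamma',\Gamma')\ge 0$. Hence $u$ increases to a limit $u_\infty\in[0,\sqrt{R_0}]$ and $R(\gs)=R_0-u(s)^2$ decreases to $R_\infty:=R_0-u_\infty^2$, so the statement is equivalent to the single identity $u_\infty=\sqrt{R_0}\cos(\tfrac\alpha2)$, i.e.\ $R_\infty=R_0\sin^2(\tfrac\alpha2)$. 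If $u_\infty=0$ then $u\equiv 0$, so $\nabla f$ and $\Ric(\Gamma',\Gamma')$ vanish along $\Gamma$; then the curvature is not strictly positive, so the soliton is a quotient of $\R\times\cigar$ and $\alpha=\pi$, in agreement. Assume henceforth $u_\infty>0$.

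The plan for the remaining identity is a dimension reduction along $\Gamma$ followed by a comparison argument. First one shows that, as $s\rii$, the pointed solitons $(M,g,\gs)$ converge smoothly to $(\R\times\cigar_{R_\infty},(0,\mathrm{tip}))$, with the $\R$-factor tangent to $\Gamma$ and $\cigar_{R_\infty}$ the cigar scaled to have tip scalar curvature $R_\infty$ (flat $\RR$ if $R_\infty=0$): since the curvature is bounded (nonnegative sectional curvature, $R\le R_0$), a Cheeger--Gromov limit $(M_\infty,g_\infty,f_\infty)$ exists and is a steady soliton; along the limit geodesic $|\nabla f_\infty|$ equals the constant $u_\infty$, whence $\Ric_\infty(\nabla f_\infty,\cdot)=0$ and $R_\infty$ is locally constant there, and the strong maximum principle for $R$ together with the rigidity of $2$-dimensional steady solitons forces the splitting, the $O(2)$-symmetry descending to the cigar's rotational symmetry (which pins its scale). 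In particular the geometry on each fixed ball about $\gs$ approaches that of $\R\times\cigar_{R_\infty}$ as $s\rii$.

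It remains to extract $\alpha$ from this model. Let $w\ge 0$ be the warping function of the $O(2)$-action and $\Sigma:=M/SO(2)$, a nonnegatively curved half-plane with totally geodesic boundary $\Gamma$ whose asymptotic cone is the sector of angle $\alpha$; then $d_M(\gs,\Gamma(-s))=d_\Sigma(\bar\Gamma(s),\bar\Gamma(-s))$, and since $\Gamma$ is a minimizing line a comparison-triangle computation in the sector yields
\begin{equation*}
\sin\tfrac\alpha2=\lim_{s\rii}\frac{d_M(\gs,\Gamma(-s))}{2s}.
\end{equation*}
On the model side, in $\R\times\cigar_{R_\infty}$ the $O(2)$-orbit radii fill $[0,W(R_\infty))$, where $W(\lambda)$, the asymptotic-cylinder radius of $\cigar_\lambda$, is an explicit decreasing function of $\lambda$ with $W(\lambda)\rii$ as $\lambda\to 0^+$. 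Combining the dimension reduction with the equation $\operatorname{div}(e^{f}\nabla_\Sigma w)=0$ on $\Sigma$ (the fibre component of the soliton equation) and the maximum principle gives $\sup_M w=W(R_\infty)$, approached only out along the ends of $\Gamma$; and the distance-distortion estimates of \cite{Lai2022_O(2)} --- which control how $d_\Sigma$ along the $\R\times\cigar$-like collar of $\Gamma$ deviates from its model value --- identify $\lim_{s\rii}d_M(\gs,\Gamma(-s))/2s$ as the corresponding function of $W(R_\infty)$, hence of $R_\infty$. Comparing with the displayed identity and unwinding the trigonometry gives $R_\infty=R_0\sin^2(\tfrac\alpha2)$.

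The hard part is this last comparison: one must rule out that some path from $\gs$ to $\Gamma(-s)$ beats the length predicted by the $\R\times\cigar$ collar. The matching upper bound is supplied by an explicit competitor through the collar, but the lower bound must exclude a shortcut through the strongly curved core --- precisely the role of the distance-distortion estimates, and where the existence of a critical point of $f$ enters, to make the dimension reduction uniform in $s$. (An alternative route --- Gauss--Bonnet on $\Sigma$, $\alpha=\pi-\int_\Sigma K_\Sigma$, together with a weighted integration identifying $\int_\Sigma K_\Sigma$ with $2\arccos\sqrt{R_\infty/R_0}$ --- meets the same essential difficulty of controlling the terms at infinity.) The dimension reduction itself is comparatively standard, except that the degenerate case $R_\infty=0$ needs separate treatment: there the collar is flat, and one argues directly that the sublinear growth of $w$ forces $d_M(\gs,\Gamma(-s))=o(s)$, i.e.\ $\alpha=0$, recovering the Bryant soliton.
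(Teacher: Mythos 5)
This lemma is not proved in the present paper at all: it is quoted verbatim from \cite[Theorem 1.6]{Lai2022_O(2)}, so there is no in-paper argument to measure your proposal against, and I can only judge it on its own terms and against the cited proof's known architecture. Your preliminary reductions are correct and do match that architecture: the identity $R+|\nabla f|^2=R(p)$ together with $u'(s)=\Ric(\Gamma',\Gamma')\ge0$ reduces the claim to $\lim_{s\rii}|\nabla f|(\gs)=\sqrt{R(p)}\cos\frac{\alpha}{2}$; the degenerate case $u_\infty=0$ is correctly handled by the splitting dichotomy; and the dimension reduction of $(M,g,\gs)$ to $\R\times\cigar$ at scale $R_\infty$, via $|\nabla f_\infty|\equiv u_\infty$ forcing $\Ric_\infty(\Gamma_\infty',\Gamma_\infty')=0$ and hence a splitting, is sound (modulo the noncollapsing input, which is Gromoll--Meyer as in Lemma \ref{compactness to a steady soliton}).

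Two genuine problems remain. First, your assertion that ``$\Gamma$ is a minimizing line'' is false whenever the curvature is strictly positive: a line would force a Cheeger--Gromoll splitting. What is true, and itself requires proof, is that each half of $\Gamma$ is a ray, so that $d(p,\Gamma(\pm s))=|s|$ and the two halves represent the two endpoints of $[-\frac{\alpha}{2},\frac{\alpha}{2}]$; only then does the cone law of cosines give $\sin\frac{\alpha}{2}=\lim_{s\rii} d(\gs,\Gamma(-s))/2s$. Second, and decisively, the quantitative heart of the theorem --- identifying $\lim d(\gs,\Gamma(-s))/2s$, equivalently $\sup_M\varphi$, with an explicit function of $R_\infty$ --- is left as a named black box (``the corresponding function of $W(R_\infty)$'' supplied by unspecified ``distance-distortion estimates''). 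That identification \emph{is} the theorem: without exhibiting the competitor path for the upper bound, the argument excluding shortcuts through the core for the lower bound, and the actual function $G$ with $G(W(R_\infty))=\sin\frac{\alpha}{2}$, nothing beyond the soft reductions has been established. Note also that each of the two inequalities $R_\infty\le R(p)\sin^2\frac{\alpha}{2}$ and $R_\infty\ge R(p)\sin^2\frac{\alpha}{2}$ has a different character --- the second is exactly the ``$R$ barely decreases along $\Gamma$'' phenomenon that Proposition \ref{l: R does not change too much} of this paper works hard to establish --- and your outline does not distinguish them. You are candid that this is ``the hard part,'' but as written the proposal defers the entire content of the identity to it.
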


It is clear that in the soliton $\R\times\cigar$, the potential function $f$ can be chosen as the direct sum of a linear function in the direction of $\R$ and the potential function on $\cigar$. So there is not a critical point of $f$ when the linear function is not a constant.
The following lemma shows that the critical point of the potential function always exists in all the other 3D steady gradient Ricci solitons. Moreover, by the soliton identity 
\begin{equation*}
    R+|\nabla f|^2=\textnormal{const.},
\end{equation*}
it is easy to see that the critical point is also a maximum point of $R$.
Moreover, the critical point is unique when the curvature is positive.

\begin{lem}(cf. \cite[Theorem 1.6]{Lai2022_O(2)})\label{l: max}
Let $(M,g,f)$ be a 3D steady gradient Ricci soliton with positive curvature, then there exists a point $p\in M$ which is a critical point of the potential function $f$. 
\end{lem}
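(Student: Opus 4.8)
The plan is to show that a steady gradient Ricci soliton with positive curvature cannot have $|\nabla f|$ bounded away from zero, which by the soliton identity $R+|\nabla f|^2=\textnormal{const.}$ forces $R$ to attain its supremum, and that supremum point will be a critical point of $f$. First I would recall the standard soliton trace identity obtained by tracing $\Ric=\nabla^2 f$ and using the contracted second Bianchi identity: $R+|\nabla f|^2=C$ for some constant $C\ge 0$, and moreover $\Delta f = R$ and $\nabla R = 2\Ric(\nabla f,\cdot)$. Since the curvature is positive, $R>0$ everywhere, so $C>0$; if the supremum of $R$ is attained at some point $p$, then $|\nabla f|(p)=0$ and we are done. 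So the whole issue is to rule out the case where $R$ does not attain its supremum, i.e.\ $\sup_M R = C$ but $R<C$ everywhere.

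The key step is an integral-curve/ODE argument along the gradient flow of $f$. Consider a point $x$ where $R(x)$ is close to $C$, so $|\nabla f|(x)$ is small but nonzero, and look at the gradient flow line $\gamma(t)$ with $\gamma'=\nabla f$. Along it, $\frac{d}{dt} f(\gamma(t)) = |\nabla f|^2 = C - R \ge 0$, and $\frac{d}{dt} R(\gamma(t)) = \langle \nabla R,\nabla f\rangle = 2\Ric(\nabla f,\nabla f)\ge 0$ by positivity of $\Ric$. So $R$ is nondecreasing along forward gradient flow; since $R$ is bounded above by $C$, $R(\gamma(t))\to$ some limit $\le C$. The strategy is to run the flow backward instead: along backward flow lines $R$ is nonincreasing, and the flow either converges to a critical point or escapes to infinity. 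Using that $(M,g)$ with positive curvature is diffeomorphic to $\R^3$ and that $f$ is (by Hamilton / Cao's results, or directly) proper and bounded below, its minimum is attained at an interior point, which is a critical point of $f$. Concretely: $f$ is bounded below because $|\nabla f|^2 = C-R \le C$ gives at worst linear growth, but on a positively curved soliton one has the sharper fact that $f$ is proper with a unique minimum — this follows from Cao's work on steady solitons and can be quoted, or recovered from the $O(2)$-symmetry structure recalled above, where $f$ restricted to the fixed geodesic $\Gamma$ is a proper function of the arclength with a single critical point.

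An alternative, more self-contained route avoids properness: use the second-derivative information. At any point, $\nabla^2 f = \Ric > 0$, so $f$ is a strictly convex function in the sense that its Hessian is positive definite everywhere. A smooth function on a complete manifold with everywhere positive-definite Hessian, if it has no critical point, has $|\nabla f|$ bounded below along gradient flow lines in a way that forces $f\to-\infty$ in infinite backward time while staying convex — but strict convexity of $f$ along any geodesic, combined with $f$ being bounded below (from $R\le C$ one only gets a bound on $|\nabla f|$, but convexity plus a single lower bound along one ray suffices), yields a contradiction unless a minimum exists. I would make this precise by restricting to the fixed geodesic $\Gamma$: $(f\circ\Gamma)'' = \nabla^2 f(\Gamma',\Gamma') = \Ric(\Gamma',\Gamma') > 0$, so $f\circ\Gamma$ is strictly convex on $\R$; such a function attains its minimum at a unique point $s_0$, and at $\Gamma(s_0)$ we have $(f\circ\Gamma)'(s_0)=0$. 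Then by the $O(2)$-symmetry $\nabla f$ is tangent to $\Gamma$ along $\Gamma$ (the gradient of an $O(2)$-invariant function must lie in the fixed-point set of the isometry), so $\nabla f(\Gamma(s_0)) = (f\circ\Gamma)'(s_0)\,\Gamma'(s_0) = 0$, giving the desired critical point $p=\Gamma(s_0)$.

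The main obstacle is justifying that $f\circ\Gamma$ is bounded below (equivalently, that the strictly convex function $f\circ\Gamma$ is not monotone on all of $\R$): a priori a strictly convex function on $\R$ could be, say, $e^{-s}$, which has no minimum. One needs genuine input ruling this out. Here the soliton identity saves the day: $\left((f\circ\Gamma)'\right)^2 = |\nabla f|^2|_\Gamma = C - R|_\Gamma \le C$, so $(f\circ\Gamma)'$ is bounded on $\R$; a convex function with bounded derivative on all of $\R$ whose derivative is not identically of one sign must have its derivative vanish somewhere — and if $(f\circ\Gamma)'$ had constant sign, say positive, then by convexity it would be bounded below by its value at any point, contradicting $R|_\Gamma\to C$ as $s\to+\infty$ forced by... hmm, this last point requires knowing $R\to C$ along $\Gamma$ in at least one direction. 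This is exactly where I would invoke the asymptotic structure: since $\sup_M R = C$ and, by the dimension-reduction/asymptotic results, $R$ along $\Gamma$ approaches its limiting value monotonically, $(f\circ\Gamma)' = \pm\sqrt{C-R|_\Gamma}\to 0$ at one end; combined with boundedness and strict convexity this pins down a unique interior zero. So the real content to be careful about is the interplay between the soliton identity and the asymptotics of $R$ along the fixed geodesic, and I expect that to be the delicate step.
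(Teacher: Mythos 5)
First, be aware that this paper does not supply a proof of Lemma \ref{l: max} at all: it is quoted from \cite{Lai2022_O(2)} (cited as Theorem 1.4 in the introduction), so there is no in-paper argument to compare against and your attempt must stand on its own. It does not, because the one step you yourself flag as delicate is precisely the entire content of the lemma, and your proposed justification for it is circular. Your reduction is fine as far as it goes: granting the $O(2)$-symmetry and the $O(2)$-invariance of $f$, the function $u(s)=f(\Gamma(s))$ is strictly convex with $(u')^2=|\nabla f|^2|_\Gamma=C-R|_\Gamma\le C$, and a zero of $u'$ yields the critical point. But a strictly convex function with bounded derivative on $\R$ need not have a critical point (your own example $e^{-s}$), so everything hinges on showing $u'$ changes sign. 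You argue this from ``$R(\Gamma(s))\to C$ at one end of $\Gamma$.'' Since $u'$ is monotone, $R|_\Gamma=C-(u')^2$ tends to $C$ at an end if and only if $u'\to 0$ at that end --- which is exactly the conclusion you are trying to reach. Nothing you quote excludes the a priori scenario that $u'>L_->0$ everywhere, i.e.\ $\sup_M R<C$ with the supremum unattained; note that $\sup_M R=C$ is itself equivalent to $\inf_M|\nabla f|=0$ and cannot be assumed. The properness route has the same defect: for steady solitons, properness and boundedness below of the convex potential are normally \emph{deduced} from $R$ attaining its maximum, so quoting them begs the question. The genuine input needed here is a compactness/structure-at-infinity argument, which is what \cite{Lai2022_O(2)} actually does.

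Two secondary points. With the convention $\Ric=\nabla^2 f$ used in this paper, the identity is $\langle\nabla R,\nabla f\rangle=-2\Ric(\nabla f,\nabla f)\le 0$ (see the beginning of Section 3), so $R$ \emph{decreases} along the forward gradient flow; your first paragraph has the opposite sign. And routing the proof through the $O(2)$-symmetry theorem of \cite{Lai2022_O(2)} is logically risky: this paper explicitly lists the existence of the critical point as an ingredient for the structural results of that circle of ideas, so one must check that the symmetry theorem does not already presuppose Lemma \ref{l: max} before using it in its proof.
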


In \cite[Lemma 4.3]{Lai2020_flying_wing} and \cite[Theorem 1.7]{Lai2022_O(2)}, we proved that the scalar curvature decays quadratically in distance to $\Gamma$ in a 3D steady gradient soliton on $\R^3$. In the following lemma, we show that this curvature estimate is uniform for all 3D steady gradient Ricci solitons on $\R^3$. 

\begin{lem}[Quadratic curvature decay]\label{l: curvature upper bound initial}
There exists a constant $C>0$ such that the following holds:
Let $(M,g)$ be a 3D steady gradient Ricci soliton on $\R^3$.
Then for any $x\in M\setminus\Gamma$, we have
\begin{equation*}
 R(x)\le\frac{C}{d_g^2(x,\Gamma)}.
\end{equation*}
\end{lem}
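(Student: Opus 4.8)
The plan is to argue by contradiction using the compactness machinery, reducing the uniform statement to the non-uniform quadratic decay already established in \cite[Lemma 4.3]{Lai2020_flying_wing} and \cite[Theorem 1.7]{Lai2022_O(2)}. Suppose no such $C$ exists. Then there is a sequence of 3D steady gradient Ricci solitons $(M_i, g_i)$ on $\R^3$ and points $x_i \in M_i \setminus \Gamma_i$ with $R_i(x_i)\, d_{g_i}^2(x_i,\Gamma_i) \to \infty$. The first step is a normalization: since the solitons have nonnegative sectional curvature and the conclusion is scale-invariant, rescale each $g_i$ so that $R_i(x_i) = 1$; then $d_{g_i}(x_i,\Gamma_i) \to \infty$. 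I would also translate so that $x_i$ is the basepoint. Note that each rescaled soliton still has a critical point $p_i$ of its potential function by Lemma \ref{l: max} (in the positively curved case; the split case $\R\times\cigar$ and its quotients can be handled directly since there the estimate is explicit), and the scalar curvature is bounded above by $R(p_i)$ via $R + |\nabla f|^2 = \text{const}$.

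The second step is to extract a smooth pointed limit. I expect one can apply Theorem \ref{t: theorem compactness} (the compactness theorem), or rather its proof, after checking that the relevant curvature and injectivity-radius bounds hold near $x_i$ — here the key point is a local curvature bound on a definite-size ball around $x_i$. Since $R_i \le R_i(p_i)$ globally, if $R_i(p_i)$ stays bounded we get a uniform curvature bound and standard $\kappa$-noncollapsing for steady solitons (nonnegative curvature plus the soliton structure) gives a smooth limit $(M_\infty, g_\infty, x_\infty)$, which is again a 3D steady gradient Ricci soliton. If instead $R_i(p_i)\to\infty$, then because $d_{g_i}(x_i,\Gamma_i)\to\infty$ and the curvature decays away from $\Gamma_i$, one still localizes: the scalar curvature on a ball $B_{g_i}(x_i, D)$ of fixed radius $D$ must be controlled, since otherwise we contradict, after a further rescaling, the known (non-uniform) quadratic decay on that particular soliton applied at a slightly different point; this bootstrap is where care is needed. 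The upshot is a limiting 3D steady gradient Ricci soliton $(M_\infty, g_\infty, x_\infty)$ with $R_\infty(x_\infty) = 1$.

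The third step is to derive a contradiction from the limit. Because $d_{g_i}(x_i,\Gamma_i)\to\infty$, the limit $M_\infty$ either contains no copy of the fixed-point set geodesic $\Gamma$ at all, or $x_\infty$ lies at infinite distance from it. In the latter situation the non-uniform quadratic decay estimate on $M_\infty$ itself forces $R_\infty(x_\infty) = 0$, contradicting $R_\infty(x_\infty)=1$. In the former situation — $M_\infty$ has no $O(2)$-fixed geodesic — one must show this is impossible for a 3D steady gradient Ricci soliton on $\R^3$ (or a limit thereof): by the $O(2)$-symmetry theorem of \cite{Lai2022_O(2)} every such soliton has such a $\Gamma$, and $O(2)$-symmetry passes to smooth limits, so $M_\infty$ does have a fixed geodesic $\Gamma_\infty$, and we are back in the previous case. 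Either way we contradict $R_\infty(x_\infty)=1$, proving the lemma.

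The main obstacle I anticipate is the localization/compactness step when $R_i(p_i)\to\infty$: one cannot directly invoke global curvature bounds, so one must carefully run the argument on controlled-size balls around $x_i$ and verify that the scalar curvature cannot blow up there without contradicting the already-known quadratic decay on each fixed soliton. This amounts to showing that $\sup_{B_{g_i}(x_i,D)} R_i$ stays bounded for each fixed $D$, which I would prove by yet another contradiction argument internal to each soliton, using that such a blow-up point would again be far from $\Gamma_i$ and hence have small curvature. Threading these nested contradiction arguments cleanly — ensuring no circularity with the compactness theorem — is the delicate part.
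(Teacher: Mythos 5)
Your proposal takes a genuinely different route from the paper: the paper does not argue by contradiction and compactness at all, but proves the estimate directly by verifying the hypothesis of Perelman's curvature estimate for non-collapsed ancient flows with nonnegative curvature operator \cite[Corollary 11.6]{Pel1}. Concretely, it establishes a uniform volume lower bound $vol(B_{\widetilde{g}}(\widetilde{x},d_g(x,\Gamma)))\ge\kappa\, d_g^3(x,\Gamma)$ in the universal cover of $B_g(x,d_g(x,\Gamma))$, with $\kappa$ independent of the soliton, via a comparison-angle argument: the minimizing geodesic from the foot point $y\in\Gamma$ to $x$ meets $\Gamma$ orthogonally, the $O(2)$-symmetry lets one arrange the angle at $y$ to be at most $\pi/2$, and angle monotonicity then gives a length lower bound for $\partial B_N(x,\tfrac12 d_g(x,y))$ in the totally geodesic surface $N$, hence an area bound, hence the 3D volume bound. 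Once this is in place the quadratic decay is immediate and uniform.

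Your scheme has a genuine gap precisely where this volume bound would be needed. First, the localization step is circular: the known quadratic decay from \cite[Lemma 4.3]{Lai2020_flying_wing} and \cite[Theorem 1.7]{Lai2022_O(2)} has a constant depending on the individual soliton, so an ``internal contradiction argument'' inside each $(M_i,g_i)$ cannot produce an $i$-independent bound on $\sup_{B_{g_i}(x_i,D)}R_i$; using the uniform version would assume the lemma. Second, even granting local curvature bounds, Cheeger--Gromov--Hamilton convergence at $x_i$ requires an injectivity radius (equivalently volume) lower bound at $x_i$ at unit scale. There is no ``standard $\kappa$-noncollapsing'' available here: after your rescaling $R_i(x_i)=1$ the Gromoll--Meyer bound $\mathrm{inj}\ge\pi/\sqrt{R_{\max}}$ (the one used in Lemma \ref{compactness to a steady soliton}) degenerates when $R_i(p_i)\to\infty$, and points far from $\Gamma_i$ sit on short $S^1$-orbits, so the family genuinely collapses at the scale $d_{g_i}(x_i,\Gamma_i)$ (think of $\R\times\cigar$ at large scales). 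Handling this collapse --- by passing to the universal cover and proving the uniform volume bound --- is the entire content of the paper's proof, and your proposal assumes it away. Finally, in your last step the fixed geodesics $\Gamma_i$ may escape to infinity relative to $x_i$, so the limit need not contain any $\Gamma_\infty$ and ``$x_\infty$ at infinite distance from $\Gamma_\infty$'' is not a configuration that occurs in a pointed smooth limit; that case would instead require classifying the possible limits without fixed geodesics, which your argument does not do.
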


\begin{proof}
By Perelman's curvature estimate for non-collapsed Ricci flows with non-negative curvature operators \cite[Corollary 11.6]{Pel1}, it suffices to find a constant $\kappa>0$ (independent of $(M,g)$) such that for all $x\in M\setminus\Gamma$ in the universal covering $(\widetilde{B_g(x,d_g(x,\Gamma))},\widetilde{g},\widetilde{x})$ of $(B_g(x,d_g(x,\Gamma)),g,x)$, we have
\begin{equation}\label{e: kappa}
    vol(B_{\widetilde{g}}(\widetilde{x},d_g(x,\Gamma)))\ge\kappa\, d^3_g(x,\Gamma).
\end{equation}

To see this, let $y\in\Gamma$ be a point such that $d_g(x,y)=d_g(x,\Gamma)$.
Then take $z=\Gamma(s)$ for some sufficiently large $s$ so that $d_g(y,z)\ge100\,d_g(x,y)$,
and hence $d_g(x,z)\ge 99\,d_g(x,y)$ by triangle inequality. This implies
\begin{equation}\label{e: small angle}
    \widetilde{\measuredangle}xzy\le 0.1.
\end{equation}
Moreover, let $\sigma_1,\sigma_2:[0,1]\ri M$ be minimizing geodesics from $y$ to $x$, and $y$ to $z$ respectively. 
By the first variation formula, $\sigma'_1(0)$ is orthogonal to $\Gamma$ at $y$. Therefore, by the $O(2)$-symmetry of the soliton we may replace $\sigma_2$ by its image under a suitable isometry in the $O(2)$-action, and assume $\measuredangle(\sigma'_1(0),\sigma'_2(0))\le\frac{\pi}{2}$.
By angle comparison this implies $\widetilde{\measuredangle}xyz\le\frac{\pi}{2}$,
which combining with \eqref{e: small angle} implies
\begin{equation}\label{e: pi20.1}
    \widetilde{\measuredangle}yxz\ge\frac{\pi}{2}-0.1.
\end{equation}

Let $N$ be a totally geodesic surface in $M$ so that the metric on $M\setminus\Gamma$ can be written as $g=g_N+\varphi^2d\theta^2$, where $\varphi$ is a positive smooth function on $N$.
We may assume $x\in N$ and the two minimizing geodesics $xy$ and $xz$ are both contained in $N$.
Let $y'$ and $z'$ be two points on $xy$ and $xz$ such that $d_g(x,y')=d_g(x,z')=\frac{1}{2}d_g(x,y)$.
Then by the angle monotonicity and \eqref{e: pi20.1} we obtain
\begin{equation*}
    \widetilde{\measuredangle}y'xz'\ge\widetilde{\measuredangle}yxz\ge\frac{\pi}{2}-0.1,
\end{equation*}
and hence 
\begin{equation*}
    |\partial B_N(x,\tfrac{1}{2}d_g(x,y))|\ge d_g(y',z')\ge C^{-1}d_g(x,y).
\end{equation*}
So by volume comparison on $N$ this implies
\begin{equation}\label{e: Nvolume}
    vol(B_N(x,\tfrac{1}{2}d_g(x,y)))\ge C^{-1}\,d^2_g(x,y).
\end{equation}
Then as in \cite[Lemma 4.3]{Lai2020_flying_wing}, we can show that \eqref{e: Nvolume} implies \eqref{e: kappa}, which hence proves the lemma.

\end{proof}

\section{Compactness of 3D steady gradient Ricci solitons}
In this section, we prove several compactness results of 3D steady gradient Ricci solitons, and study the asymptotic behavior of the scalar curvature at infinity along $\Gamma$.
Since the subset $\Gamma\setminus\{p\}$ is a union of two integral curves of $\nabla f$, it follows by the soliton identity $\langle\nabla R,\nabla f\rangle=-\Ric(\nabla f,\nabla f)$ that
$R(\Gamma(s))$ decreases in $s$ on $[0,\infty)$, and increases on $(-\infty,0]$. 
The main result in this section is Proposition \ref{l: R does not change too much}, which shows that $R$ barely decreases along $\Gamma$ starting from $\Gamma(s_0)$, if $R(\Gamma(s_0))$ has a lower bound and $s_0$ is sufficiently large. This is the key ingredient in the proofs of Theorem \ref{t: flying wing with prescribed angles} and \ref{t: theorem compactness}.

We will also assume that $dr^2+g_c$ is the product metric on $\R\times\cigar$ such that $R(r,x_{tip})=1$, $r\in\R,x_{tip}\in\cigar$, and $g_{stan}$ is the flat product metric on $\RR\times S^1$ such that the length of each $S^1$-factor is equal to $4\pi$. Note that for any sequence of points $q_i\in \cigar$, $q_i\rii$, the manifolds $(\cigar,g_c,q_i)$ smoothly converge to $\R\times S^1$ where the length of the $S^1$-factor is equal to $4\pi$.

\begin{lem}\label{compactness to a steady soliton}
Let $(M_i,g_i,f_i,p_i)$ be a sequence of 3D steady gradient Ricci solitons on $\R^3$ satisfying $R(p_i)=1$.
Suppose there exist $\epsilon>0$ and $q_i=\Gamma_i(s_i)\in M_i$ for some $s_i\in\R$ such that $R(q_i)\ge\epsilon$ for all $i$.
Then after passing to a subsequence, the solitons $(M_i,g_i,f_i-f_i(q_i),q_i)$ smoothly converge to a non-flat 3D steady gradient Ricci soliton $(M_{\infty},g_{\infty},f_{\infty},q_{\infty})$ on $\R^3$, and $q_{\infty}\in\Gamma_{\infty}$, where $\Gamma_{\infty}$ is the unit speed complete geodesic fixed by the $O(2)$-isometry and $f_{\infty}(q_{\infty})=0$.
\end{lem}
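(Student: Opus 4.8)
The proof goes through Hamilton's compactness theorem (pointed $C^\infty$ Cheeger--Gromov compactness), so it reduces to producing uniform bounds on $\Rm_{g_i}$ and all its covariant derivatives and a uniform lower bound on $\mathrm{inj}_{g_i}(q_i)$, and then transferring the potential function, the $\sy$-symmetry and the geodesic $\Gamma_i$ to the limit. For the curvature bounds: since $p_i$ is a critical point of $f_i$, the soliton identity gives $R+|\nabla f_i|^2\equiv R(p_i)=1$, and together with $\sec_{g_i}\ge 0$ this yields $0\le\Ric_{g_i}\le R\,g_i\le g_i$; as the curvature tensor of a $3$-manifold is algebraically determined by its Ricci tensor, $|\Rm_{g_i}|\le C_0$ for an absolute constant $C_0$. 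The eternal flow $g_i(t)=\phi_{i,-t}^*g_i$ is isometric to $g_i$, hence satisfies the same bound for all $t$, so Shi's local derivative estimates (equivalently, the elliptic system satisfied by a gradient soliton) give $|\nabla^k\Rm_{g_i}|\le C_k$, uniformly in $i$.

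The only nontrivial point is the injectivity radius bound. From $\sec_{g_i}\le C_0$ the conjugate radius at every point is $\ge\pi/\sqrt{C_0}$, so by the pointwise form of Klingenberg's lemma it suffices to bound below the length $\ell_i$ of the shortest geodesic loop based at $q_i$, and we may assume $\ell_i\le 2\pi/\sqrt{C_0}$. Since $M_i\cong\R^3$ is simply connected, such a loop $\gamma_i$ is contractible and bounds a disk; taking an area-minimizing one, $D_i$, the Gauss equation and minimality force its intrinsic curvature to satisfy $K_{D_i}\le\sec_{g_i}(TD_i)\le C_0$, while comparison with the geodesic cone over $\gamma_i$ from $q_i$ (which lies within distance $\ell_i/2<\pi/\sqrt{C_0}$ of $q_i$, inside the conjugate radius) gives $\mathrm{area}(D_i)\le C_1\ell_i^2$. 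As $\partial D_i=\gamma_i$ is a geodesic, Gauss--Bonnet gives $\int_{D_i}K_{D_i}>\pi$, hence
\begin{equation*}
\pi<\int_{D_i}K_{D_i}\le C_0\,\mathrm{area}(D_i)\le C_0C_1\,\ell_i^2,
\end{equation*}
so $\ell_i\ge c_0>0$ and $\mathrm{inj}_{g_i}(q_i)\ge i_0:=\min(\pi/\sqrt{C_0},\,c_0/2)$. Günther's volume comparison then gives $vol(B_{g_i}(q_i,i_0))\ge v_0>0$, uniformly. (Alternatively, the uniform non-collapsedness can be read off from the proof of Lemma~\ref{l: curvature upper bound initial}.)

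With these uniform bounds, Hamilton's theorem gives a subsequence and diffeomorphisms $\psi_i:\Omega_i\to M_i$ onto an exhaustion of a complete pointed manifold $(M_\infty,g_\infty,q_\infty)$ with $\psi_i^*g_i\to g_\infty$ in $C^\infty_{loc}$. Set $\bar f_i:=f_i-f_i(q_i)$. Since $|\nabla\bar f_i|=\sqrt{1-R}\le 1$, $\nabla^2\bar f_i=\Ric_{g_i}$, and the higher covariant derivatives of $\bar f_i$ are bounded by the curvature estimates above, the functions $\psi_i^*\bar f_i$ (vanishing at $\psi_i^{-1}(q_i)$) are uniformly bounded in $C^k_{loc}$; so after a further subsequence $\psi_i^*\bar f_i\to f_\infty$ in $C^\infty_{loc}$ with $f_\infty(q_\infty)=0$, and passing $\Ric_{g_i}=\nabla^2\bar f_i$ to the limit shows $(M_\infty,g_\infty,f_\infty)$ is a steady gradient Ricci soliton. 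The isometric $\sy$-actions fixing $\Gamma_i$ converge to an isometric $\sy$-action on $g_\infty$, nontrivial since its isotropy representation at $q_\infty$ is the limit of those at $q_i$; its fixed-point set is a complete geodesic $\Gamma_\infty$, the $C^\infty_{loc}$-limit of the $\Gamma_i$, so $q_\infty\in\Gamma_\infty$ and $M_\infty\setminus\Gamma_\infty$ is the corresponding warped product. Finally $R_{g_\infty}(q_\infty)=\lim_{i\rii}R(q_i)\ge\epsilon>0$, so $g_\infty$ is non-flat; by the structure theory of $3$D steady gradient Ricci solitons, and since $\Gamma_\infty$ is a complete line fixed by the $O(2)$-action (which excludes the nontrivial quotients of $\R\times\cigar$), $M_\infty$ is diffeomorphic to $\R^3$.

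The crux is the uniform injectivity-radius bound at $q_i$ established above: $q_i$ sits on the axis $\Gamma_i$, exactly where the $S^1$-fibers of the warped product degenerate, so nothing local to the fiber structure is available, and the estimate has to use the global simple-connectedness of $M_i$ together with the normalization $R(p_i)=1$ (which enters only through $\sec_{g_i}\le C_0$). Everything else is standard compactness bookkeeping.
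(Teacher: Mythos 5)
Your overall strategy (Hamilton compactness, a uniform injectivity radius bound at $q_i$, passing $\widetilde f_i=f_i-f_i(q_i)$ to the limit via $|\nabla\widetilde f_i|\le 1$ and $\nabla^2\widetilde f_i=\Ric$, then ruling out $S^1\times\cigar$) is the same as the paper's. One stylistic difference: you prove the injectivity radius bound from scratch via Klingenberg's lemma, an area-minimizing disk, the cone comparison and Gauss--Bonnet. That argument is essentially sound (and has the mild advantage of not needing strict positivity of the curvature), but it is a long detour: the paper gets $\inf_x\textnormal{inj}_{g_i}(x)\ge\pi/\sqrt{R_{\max}}=\pi$ in one line from the Gromoll--Meyer estimate for open manifolds of positive sectional curvature, using only $R\le R(p_i)=1$.

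There is, however, a genuine gap at the step you dismiss as bookkeeping: the claim that ``$\Gamma_\infty$ is a complete \emph{line} fixed by the $O(2)$-action, which excludes the nontrivial quotients of $\R\times\cigar$.'' The word ``line'' is doing all the work and is unjustified. A pointed $C^\infty_{loc}$ limit of injective complete geodesics need not be injective: if $(M_i,g_i,q_i)$ converged to a rescaling of $S^1\times\cigar$, the axes $\Gamma_i$ could wind around near the core circle $V=S^1\times\{x_{tip}\}$ before escaping to infinity, and their limit would be the closed geodesic $V$ traversed infinitely often --- which is exactly the fixed-point set of the limiting $O(2)$-action, so nothing is contradicted. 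Ruling this out is the real content of the last step, and the paper spends a full paragraph on it: using that the Killing fields $X_i$ vanish precisely on $\Gamma_i$ and converge to $X_\infty$ (which is small only near $V$), one shows by a continuity/induction argument along $\Gamma_i$ that $\Gamma_i(-\infty,\infty)\subset\psi_i(B_1(V))$, i.e.\ $\Gamma_i$ would be trapped in a compact set, contradicting the unboundedness of $\Gamma_i$ in $M_i\cong\R^3$. Your proof needs this (or an equivalent) argument; as written, the identification of the crux as the injectivity radius bound misses where the actual difficulty lies.
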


\begin{proof}
First, since $(M_i,g_i)$ is non-compact complete, the curvature is positive, and $R_{max}=R(p_i)=1$, by a well-known fact of Gromoll and Meyer (see \cite{CheegerEbin}), we always have an injectivity radius lower bound
\begin{equation}\label{e: v_0}
    \inf_{x\in M_i}\textnormal{inj}_{g_i}(x)\ge\frac{\pi}{\sqrt{R_{\max}}}=\pi.
\end{equation}
Therefore, by Hamilton's compactness of Ricci flow \cite{Hamilton_compactness} we may pass to a subsequence and assume that the Ricci flows $(M_i,g_i(t),q_i)$, $t\in(-\infty,0]$, smoothly converge to a Ricci flow $(M_{\infty},g_{\infty}(t),q_{\infty})$, $t\in(-\infty,0]$.

Next, we show that  $(M_{\infty},g_{\infty}(t),q_{\infty})$ is the Ricci flow of a steady gradient Ricci soliton.
To see this, let $\widetilde{f}_i=f_i-f_i(q_i)$, then $\widetilde{f}_i(q_i)=0$. Moreover, by $|\nabla \widetilde{f}_i|=|\nabla f_i|\le 1$ and the soliton equation
\begin{equation*}
    \nabla^2 \widetilde{f}_i=\nabla^2 f_i=\Ric_{g_i},
\end{equation*}
we can deduce that for any integer $k\ge1$, there is $C_k>0$ such that $|\nabla^k \widetilde{f}_i|\le C_k$.
Therefore, after passing to a subsequence we may assume the functions $\widetilde{f}_i$ on $M_i$ smoothly
converge to a smooth function $f_{\infty}$ on $M_{\infty}$, which satisfies $\Ric_{g_{\infty}(0)}=\nabla^2 f_{\infty}$.
So $(M_{\infty},g_{\infty}(0),q_{\infty})$ is a 3D steady gradient Ricci soliton which satisfies
\begin{equation*}
    R(q_{\infty})=\lim_{i\rii}R(q_i)\ge\epsilon>0.
\end{equation*}

It remains to show that $M_{\infty}$ is diffeomorphic to $\R^3$. For this, it suffices to show that $(M_{\infty},g_{\infty}(0),q_{\infty})$ is not isometric to a recaling of $S^1\times\cigar$.
Suppose it is isometric to a recaling of $S^1\times\cigar$, then
let $\psi_i:(S^1\times\cigar,g_{\infty}(0),q_{\infty})\ri (M_i,g_i,q_i)$ be an $\epsilon_i$-isometry, where $\epsilon_i\ri0$ as $i\rii$.
Let $V=S^1\times\{x_{tip}\}\subset S^1\times\cigar$, and $B_r(V)=\{x\in S^1\times\cigar: d_{g_{\infty}(0)}(x,V)< r\}$ for any $r>0$.
We claim that $\Gamma_i(-\infty,\infty)\subset \psi_i(B_1(V))$.
To see this, note that the $O(2)$-isometry on $(M_i,g_i)$ converge to the $O(2)$-isometry on $S^1\times\cigar$ that fixes $S^1\times{x_{tip}}$. Letting $X_i$ and $X_{\infty}$ be the corresponding killing fields on $M_i$ and $S^1\times\cigar$, then it follows that $(\psi^{-1}_{i})_*(X_i)$ smoothly converge to $X_{\infty}$ as $i\rii$.
First, we have $q_i=\Gamma_i(s_i)=\psi_i(q_{\infty})\in \psi_i(B_1(V))$.
Next, suppose $\Gamma_i(s)\in \psi_i(B_1(V))$ for some $s\in\R$, then by $X_i(\Gamma_i(s))=0$ we see that $\Gamma_i(s)\in\psi_i(B_{\frac{1}{10}}(V))$, and hence $\Gamma_i([s-\frac{1}{2},s+\frac{1}{2}])\subset B_{g_i}(\Gamma_i(s),\frac{1}{2})\subset \psi_i(B_1(V))$. Therefore, by induction we can deduce $\Gamma_i(-\infty,\infty)\subset \psi_i(B_1(V))$, which contradicts the non-compactness of $\Gamma_i$. 
\end{proof}

Next, we show a special case of the compactness Lemma \ref{compactness to a steady soliton}, where the limit is $\R\times\cigar$.
For a fixed 3D flying wing, we know that it converges to a rescaling of $\R\times\cigar$ along $\Gamma$, see \cite{Lai2020_flying_wing,Lai2022_O(2)}. 
Lemma \ref{l: |nabla f| small} and \ref{l: looks like RxCigar} provide sufficient conditions, under which the geometry along $\Gamma$ is close to $\R\times\cigar$.
These conditions are uniform for all 3D steady gradient Ricci solitons.
First, Lemma \ref{l: |nabla f| small} shows that the closeness to $\R\times\cigar$ is guaranteed when $|\nabla f|(\Gamma(s))$ is sufficiently small and $s$ is sufficiently large.

\begin{lem}\label{l: |nabla f| small}
For any $\epsilon>0$, there exist $D(\epsilon),\delta(\epsilon)>0$ such that the following holds:

Let $(M,g,f,p)$ be a 3D steady gradient Ricci soliton on $\R^3$, and $R(p)=1$. 
Suppose $|\nabla f|(\Gamma(s))\le\delta(\epsilon)$ for some $|s|>D(\epsilon)$, then the pointed manifold $(M,g,\Gamma(s))$ is $\epsilon$-close to $(\R\times\cigar,(0,x_{tip}))$.
\end{lem}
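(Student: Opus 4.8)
The plan is to argue by contradiction and compactness, feeding on Lemma~\ref{compactness to a steady soliton}. Suppose the statement fails for some $\epsilon_0>0$; then for each $j\in\N$ there is a 3D steady gradient Ricci soliton $(M_j,g_j,f_j,p_j)$ on $\R^3$ with $R(p_j)=1$ and a parameter $s_j$ with $|s_j|>j$ and $|\nabla f_j|(\Gamma_j(s_j))\le 1/j$, such that $q_j:=\Gamma_j(s_j)$ has the property that $(M_j,g_j,q_j)$ is \emph{not} $\epsilon_0$-close to $(\R\times\cigar,(0,x_{tip}))$. Thus $|s_j|\rii$ and $|\nabla f_j|(q_j)\ri 0$, and after passing to a subsequence we may assume $s_j\rii$ (otherwise relabel $\Gamma_j(\cdot)$ by $\Gamma_j(-\cdot)$, which preserves all relevant properties, since along $\Gamma_j$ the scalar curvature decreases on $[0,\infty)$, increases on $(-\infty,0]$, and attains its maximum $R(p_j)=1$ at $\Gamma_j(0)$).

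By the soliton identity $R+|\nabla f_j|^2\equiv R(p_j)=1$ we get $R(q_j)=1-|\nabla f_j|^2(q_j)\ri 1$, so $R(q_j)\ge\tfrac12$ for all large $j$ and Lemma~\ref{compactness to a steady soliton} applies: after a further subsequence, $(M_j,g_j,f_j-f_j(q_j),q_j)$ converges smoothly to a non-flat 3D steady gradient Ricci soliton $(M_\infty,g_\infty,f_\infty,q_\infty)$ on $\R^3$ with $q_\infty\in\Gamma_\infty$ and $f_\infty(q_\infty)=0$. Passing to the limit, $|\nabla f_\infty|(q_\infty)=\lim_j|\nabla f_j|(q_j)=0$ and $R(q_\infty)=\lim_j R(q_j)=1$, so $q_\infty$ is a critical point of $f_\infty$; by the soliton identity on $M_\infty$ the constant there is $1$, hence $R\le 1$ on $M_\infty$ and $q_\infty$ is a maximum point of $R$.

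The heart of the proof is to upgrade this to the statement that $R\equiv R_{\max}=1$ along a whole ray of $\Gamma_\infty$. Set $\gamma_j(t):=\Gamma_j(s_j+t)$, so $\gamma_j(0)=q_j$; these are unit-speed geodesics fixed by the $O(2)$-actions, and because the corresponding Killing fields converge (as in the proof of Lemma~\ref{compactness to a steady soliton}), the $\gamma_j$ subconverge to a unit-speed geodesic through $q_\infty$ fixed by the limiting $O(2)$-action, which must be $\Gamma_\infty$; normalize so that $q_\infty=\Gamma_\infty(0)$ and $\gamma_j(t)\ri\Gamma_\infty(t)$ locally. Fix any $t\le 0$; for $j$ large, $0\le s_j+t\le s_j$, so monotonicity of $R$ along $\Gamma_j$ gives $R(q_j)=R(\Gamma_j(s_j))\le R(\Gamma_j(s_j+t))\le R(\Gamma_j(0))=1$, and letting $j\rii$ yields $R(\Gamma_\infty(t))=1$. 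Hence every point of the ray $\Gamma_\infty((-\infty,0])$ is a maximum point of $R$, hence a critical point of $f_\infty$, so $f_\infty$ has at least two distinct critical points. By the uniqueness of the critical point under positive curvature (recalled before Lemma~\ref{l: max}), $M_\infty$ is not strictly positively curved, hence is isometric to a quotient of $\R\times\cigar$; since $M_\infty$ is diffeomorphic to $\R^3$ it is isometric to $\R\times\cigar$ itself, and the normalization $R_{\max}=1$ forces the metric $dr^2+g_c$. (Alternatively, $R\equiv 1$ on this ray forces the asymptotic cone angle of $M_\infty$ to be $\pi$ via Lemma~\ref{l: quantitative relation}, which again identifies $M_\infty$ with $\R\times\cigar$.) As $q_\infty$ lies on the $O(2)$-fixed geodesic $\R\times\{x_{tip}\}$, an $\R$-translation makes $(M_\infty,g_\infty,q_\infty)$ isometric to $(\R\times\cigar,(0,x_{tip}))$, so $(M_j,g_j,q_j)$ is $\epsilon_0$-close to $(\R\times\cigar,(0,x_{tip}))$ for $j$ large --- contradicting the choice of $M_j$. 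The main obstacle is precisely this middle step: controlling the convergence of the geodesics $\Gamma_j$ and squeezing $R$ to be identically maximal on a full ray of the limit. This is where the hypothesis $|s|>D(\epsilon)$ (i.e. $|s_j|\rii$) is essential; without it the limit could be, e.g., the Bryant soliton based at its tip, which is not $\R\times\cigar$.
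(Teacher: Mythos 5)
Your proposal is correct and follows essentially the same route as the paper: argue by contradiction, extract a limit soliton via Lemma \ref{compactness to a steady soliton}, and use the monotonicity of $R$ (equivalently of $|\nabla f|$) along $\Gamma_j$ on $[0,s_j]$ with $s_j\rii$ to force $R\equiv R_{\max}$ (equivalently $\nabla f_\infty\equiv 0$) on the ray $\Gamma_\infty((-\infty,0])$, which identifies the limit as $\R\times\cigar$ and yields the contradiction. The only cosmetic difference is the last step, where you invoke uniqueness of the critical point under positive curvature instead of observing directly that $\Ric(\Gamma_\infty'(0),\Gamma_\infty'(0))=\nabla^2 f_\infty(\Gamma_\infty'(0),\Gamma_\infty'(0))=0$ rules out strictly positive curvature; both are valid.
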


\begin{proof}
Without loss of generality we may assume $s>0$.
Suppose this is not true for some $\epsilon>0$, then we can find sequence of numbers $D_i\rii$ and $\delta_i\ri0$ and a sequence of 3D steady gradient Ricci solitons $(M_i,g_i,p_i)$ with $R(p_i)=1$, such that $|\nabla f|(\Gamma_i(D_i))\le \delta_i$, but the pointed manifolds $(M_i,g_i,\Gamma_i(D_i))$ is not $\epsilon$-close to $\R\times\cigar$.
Since for any $s\ge0$ we have
\begin{equation}
    \frac{d^2}{ds^2}f(\Gamma(s))=\nabla^2 f(\Gamma'(s),\Gamma'(s))=\Ric(\Gamma'(s),\Gamma'(s))\ge 0,
\end{equation}
it follows that $|\nabla f|(\Gamma(s))$ decreases in $s$, and thus $|\nabla f|(\Gamma_i(s))\le\delta_i$ for all $s\in[0,D_i]$.

By Lemma \ref{compactness to a steady soliton} we may assume the manifolds $(M_i,g_i,f_i-f_i(\Gamma_i(D_i)),\Gamma_i(D_i))$ converge to a steady gradient Ricci soliton $(M_{\infty},g_{\infty},f_{\infty},p_{\infty})$ on $\R^3$.
Let $\Gamma_{\infty}:(-\infty,\infty)\ri M$ be the complete geodesic fixed by the $O(2)$-isometry, and $\Gamma_{\infty}(0)=p_{\infty}$, then it is easy to see that $|\nabla f_{\infty}|(\Gamma_{\infty}(s))\equiv0$ for all $s\le0$, and $R(p_{\infty})=1$.
In particular, this implies $\Ric(\Gamma'_{\infty}(0),\Gamma'_{\infty}(0))=\nabla^2 f_{\infty}(\Gamma'_{\infty}(0),\Gamma'_{\infty}(0))=0$, and hence $(M_{\infty},g_{\infty},p_{\infty})$ is isometric to $\R\times\cigar$, which is a contradiction for large $i$.
\end{proof}

We say an $n$-dimensional Riemannian manifold $M$ dimension reduces to an $(n-1)$-dimensional manifold $N$ along a sequence of points $x_i\in M$, if the manifolds $(M,R(x_i)g,x_i)$ smoothly converge to $\R\times N$.

We know that a 3D flying wing always dimension reduces to $\cigar$. However, the $\epsilon$-closeness to $\R\times\cigar$ for a fixed $\epsilon>0$ may happen at any arbitrarily large distance to the critical point, because the soliton may be very close to the Bryant soliton. 
In the following we prove a dimension reduction which is uniform for all 3D flying wings.
It shows that there is a uniform distance for the $\epsilon$-closeness to be achieved, as long as $R$ is uniformly bounded away from zero. A key ingredient in the proof is Lemma \ref{l: max} (the existence of the critical point).

\begin{lem}[Uniform dimension reduction]\label{l: looks like RxCigar}
For any $R_0\in(0,1],\epsilon>0$, there exists $D(R_0,\epsilon)>0$ such that the following holds:

Let $(M,g,f,p)$ be a 3D steady gradient Ricci soliton on $\R^3$.
Suppose $R(p)=1$. 
Then for any $|s|>D(R_0,\epsilon)$, if $R(\Gamma(s))\ge R_0$, then the manifold $(M, R(\Gamma(s))g,\Gamma(s))$ is $\epsilon$-close to $(\R\times\cigar,(0,x_{tip}))$.
\end{lem}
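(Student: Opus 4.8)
The plan is to argue by contradiction and compactness, mimicking the structure of the proof of Lemma \ref{l: |nabla f| small} but now using the rescaled metric $R(\Gamma(s))g$ and, crucially, the existence of the critical point $p$ (Lemma \ref{l: max}) to rule out the Bryant-soliton degeneration. Suppose the statement fails for some $R_0\in(0,1]$ and $\epsilon>0$. Then there are $D_i\to\infty$ and a sequence of 3D steady gradient Ricci solitons $(M_i,g_i,f_i,p_i)$ on $\R^3$ with $R(p_i)=1$, together with $s_i$ with $|s_i|>D_i$, such that $R_i(\Gamma_i(s_i))\ge R_0$, but $(M_i, R_i(\Gamma_i(s_i))g_i, \Gamma_i(s_i))$ is \emph{not} $\epsilon$-close to $(\R\times\cigar,(0,x_{tip}))$. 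Write $q_i=\Gamma_i(s_i)$ and $\rho_i=R_i(q_i)\in[R_0,1]$. Passing to a subsequence we may assume $\rho_i\to\rho_\infty\in[R_0,1]$.

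First I would set up the convergence. Since $R_i(q_i)=\rho_i\ge R_0>0$, Lemma \ref{compactness to a steady soliton} applies: after passing to a subsequence, $(M_i,g_i,f_i-f_i(q_i),q_i)$ smoothly converges to a non-flat 3D steady gradient Ricci soliton $(M_\infty,g_\infty,f_\infty,q_\infty)$ on $\R^3$, with $q_\infty\in\Gamma_\infty$ the geodesic fixed by the $O(2)$-symmetry, and $R_{g_\infty}(q_\infty)=\rho_\infty\ge R_0$. Because the convergence is smooth and the rescaling factors $\rho_i$ converge to $\rho_\infty>0$, the rescaled manifolds $(M_i,\rho_i g_i,q_i)$ also converge smoothly, to $(M_\infty,\rho_\infty g_\infty,q_\infty)$, which has scalar curvature $1$ at $q_\infty$.

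The heart of the argument is to show that this limit $(M_\infty,\rho_\infty g_\infty)$ is isometric to $\R\times\cigar$ (normalized so that $R=1$ at $(0,x_{tip})$), which contradicts the non-$\epsilon$-closeness for large $i$. Here is where Lemma \ref{l: max} enters. The key point is that the critical point $p_i=\Gamma_i(0)$ of $f_i$ must escape to infinity relative to the basepoint $q_i$; equivalently, $d_{g_i}(p_i,q_i)\to\infty$. Indeed $|s_i|>D_i\to\infty$ and $\Gamma_i$ is a unit-speed geodesic, so $d_{g_i}(p_i,q_i)=d_{g_i}(\Gamma_i(0),\Gamma_i(s_i))=|s_i|\to\infty$; a fortiori $d_{\rho_i g_i}(p_i,q_i)=\sqrt{\rho_i}\,|s_i|\ge \sqrt{R_0}\,D_i\to\infty$. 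Consequently, on any fixed ball around $q_\infty$ the limit soliton $(M_\infty,\rho_\infty g_\infty)$ has no critical point of $f_\infty$ in its interior, and in fact $f_\infty$ has no critical point on all of $M_\infty$, for if it did, the uniform gradient bound $|\nabla f_i|\le 1$ together with smooth convergence would force a critical point of $f_i$ within bounded $g_i$-distance of $q_i$, contradicting $d_{g_i}(p_i,q_i)\to\infty$ and the uniqueness of the critical point on a positively curved 3D steady soliton (Lemma \ref{l: max}). By Lemma \ref{l: max} applied in the contrapositive, a 3D steady gradient Ricci soliton with \emph{no} critical point of its potential function cannot have strictly positive curvature; hence $(M_\infty,\rho_\infty g_\infty)$ is a quotient of $\R\times\cigar$, and being diffeomorphic to $\R^3$ (by Lemma \ref{compactness to a steady soliton}) and $O(2)$-symmetric with $\Gamma_\infty$ fixed, it is isometric to $\R\times\cigar$; normalizing, $q_\infty=(0,x_{tip})$ and $R=1$ there. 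This is the desired contradiction.

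The main obstacle I anticipate is the step asserting that $d_{g_i}(p_i,q_i)\to\infty$ forces the limit potential $f_\infty$ to be critical-point-free: one must be careful that the basepoint shift $f_i\mapsto f_i-f_i(q_i)$ does not move critical points, and that smooth convergence of $\widetilde f_i\to f_\infty$ on compact sets genuinely transfers the absence of critical points in large $g_i$-balls to the absence of a critical point anywhere on $M_\infty$. The cleanest way is: fix any $r>0$; for $i$ large, $B_{\rho_i g_i}(q_i,r)$ contains no critical point of $f_i$ (since the unique one sits at $g_i$-distance $|s_i|\to\infty$, hence at $\rho_i g_i$-distance $\ge\sqrt{R_0}D_i\to\infty$); therefore $\inf_{B_{\rho_\infty g_\infty}(q_\infty,r)}|\nabla f_\infty|\ge 0$ with the inf attained only in the limit, but more usefully, any critical point of $f_\infty$ would lie in some $B(q_\infty,r)$ and, by $C^1$-convergence of $\widetilde f_i$, would be approximated by near-critical points of $f_i$; feeding this into the soliton identity $R+|\nabla f|^2=\mathrm{const}$ and the positivity/uniqueness statement of Lemma \ref{l: max} yields the contradiction. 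Once $f_\infty$ is critical-point-free, Lemma \ref{l: max} (contrapositive) and the structure theory recalled in the Preliminaries finish the proof.
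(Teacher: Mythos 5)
Your overall architecture (contradiction, compactness via Lemma \ref{compactness to a steady soliton}, then using Lemma \ref{l: max} to force the limit to split as $\R\times\cigar$) matches the paper's, and your treatment of the situation where $f_\infty$ is critical-point-free is essentially the paper's argument. However, the pivotal step --- that $d_{g_i}(p_i,q_i)=|s_i|\to\infty$ forces the limit potential $f_\infty$ to have \emph{no} critical point --- is a genuine gap, and the case it tries to skip is exactly where the real work lies. Smooth ($C^1$) convergence $\widetilde f_i\to f_\infty$ transfers critical points only in one direction: if $z_i\to z_\infty$ with $\nabla f_i(z_i)=0$, then $\nabla f_\infty(z_\infty)=0$; the converse fails. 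A critical point of $f_\infty$ only yields points $z_i$ with $|\nabla f_i|(z_i)\to 0$, not actual critical points of $f_i$, so there is no conflict with the $p_i$ escaping to infinity. (Concretely: take flying wings with cone angles tending to $\pi$ and basepoints $q_i=\Gamma_i(s_i)$, $s_i\to\infty$, with $|\nabla f_i|(q_i)\to 0$; the limit is $\R\times\cigar$, whose potential has an entire line of critical points, while every $p_i$ sits at distance $s_i\to\infty$ from $q_i$.) Your proposed repair via the identity $R+|\nabla f|^2=\mathrm{const}$ only gives $R(z_i)\to 1$ at near-critical points, which is not by itself a contradiction with anything.

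The scenario your argument leaves open is: $f_\infty$ has a critical point at some $\Gamma_\infty(s_0)$ with $s_0\neq 0$, while $R_{g_\infty}(q_\infty)=\rho_\infty<1$. Then $M_\infty$ could a priori be a positively curved soliton (hence not $\R\times\cigar$), and your chain ``no critical point $\Rightarrow$ not strictly positively curved $\Rightarrow$ splits'' never gets started. The paper disposes of this case by a separate argument that you do not have: since $|\nabla f_i|(\Gamma_i(s_i+s_0))\to 0$ and $s_i+s_0\to\infty$, Lemma \ref{l: |nabla f| small} applies at the shifted basepoints, so $(M_i,g_i,\Gamma_i(s_i+s_0))\to(\R\times\cigar,(0,x_{tip}))$ with the geodesics $\Gamma_i$ converging to the line $\R\times\{x_{tip}\}$, along which $R\equiv 1$; hence $R(\Gamma_i(s_i))\to 1$, contradicting $\rho_\infty<1$. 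Note also that it is only in this remaining case ($\rho_\infty<1$, no critical point of $f_\infty$ on $\Gamma_\infty$) that Lemma \ref{l: max} is invoked, and there it suffices to exclude a critical point \emph{on $\Gamma_\infty$} (any critical point is the maximum of $R$, hence $O(2)$-fixed, hence on $\Gamma_\infty$); you do not need, and cannot get, critical-point-freeness on all of $M_\infty$ from the escape of the $p_i$. To close the gap you must add the dichotomy on whether $f_\infty$ has a critical point on $\Gamma_\infty$ and handle the affirmative case via Lemma \ref{l: |nabla f| small}.
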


\begin{proof}
Suppose the lemma is false for some $\epsilon>0$, then we can find a sequence of $(M_i,g_i,f_i,p_i)$ which are 3D steady gradient Ricci solitons on $\R^3$, and a sequence of numbers $D_i\rii$ such that $R(p_i)=1$, $R(\Gamma_i(D_i))\ge R_0$, but $(M_i, R(\Gamma_i(D_i))g_i,\Gamma_i(D_i))$ is not $\epsilon$-close to $(\R\times\cigar,(0,x_{tip}))$ for each $i$.
First, since $R_0>0$, by Lemma \ref{compactness to a steady soliton} we may assume after passing to a subsequence that $(M_i, R(\Gamma_i(D_i))g_i,\Gamma_i(D_i))$ smoothly converge to a steady gradient Ricci soliton $(M_{\infty},g_{\infty},f_{\infty},p_{\infty})$ on $\R^3$, and $R(p_{\infty})=1$. 

First, if $|\nabla f_i|(\Gamma_i(D_i))\ri0$, then we get a contradiction by Lemma \ref{l: |nabla f| small}.
So we may assume $\limsup_{i\rii}|\nabla f_i|(\Gamma_i(D_i))>0$. Therefore, by the soliton identity $R+|\nabla f|^2=1$ and passing to a subsequence we may assume 
\begin{equation}\label{e: Rinfinity}
    R_0\le\lim_{i\rii}R(\Gamma_i(D_i))<1.
\end{equation}
Then we divide the discussion into two cases depending on whether there is a critical point of $f_{\infty}$ on the unit speed complete geodesic $\Gamma_{\infty}:(-\infty,\infty)\ri M_{\infty}$ fixed by the $O(2)$-isometry, and $\Gamma_{\infty}(0)=p_{\infty}$.

\textbf{Case 1:} Suppose there exists $s_0\in\R$ such that $|\nabla f_{\infty}|(\Gamma_{\infty}(s_0))=0$.
Then it follows that $|\nabla f_i|(\Gamma_i(D_i+s_0))\ri0$ as $i\rii$.
Since $D_i+s_0\rii$, by Lemma \ref{compactness to a steady soliton} we see that $(M_i,g_i,\Gamma_i(D_i+s_0))$ smoothly converge to $(\R\times\cigar,(0,x_{tip}))$, and the geodesics $\widetilde{\Gamma}_i(s):=\Gamma_i(s+D_i+s_0)$, $s\in\R$, in $(M_i,g_i)$ smoothly converge to the line $\R\times\{x_{tip}\}$ in $\R\times\cigar$ modulo the diffeomorphisms.
In particular, this implies
\begin{equation*}
    \lim_{i\rii}R(\Gamma_i(D_i))=\lim_{i\rii}R(\widetilde{\Gamma}_i(-s_0))=1,
\end{equation*}
which contradicts with the assumption \eqref{e: Rinfinity}.

\textbf{Case 2:} Suppose $|\nabla f_{\infty}|(\Gamma_{\infty}(s))>0$ for all $s\in\R$. We claim that $(M_{\infty},g_{\infty})$ must be isometric to $\R\times\cigar$. Suppose not, then it has positive curvature, and by Lemma \ref{l: max} there exists a unique critical point $q$ of $f_{\infty}$, which is also the unique maximum point of $R$. Therefore, $q$ is fixed by the $O(2)$-isometry and hence must be on $\Gamma_{\infty}$, which contradicts our assumption.
\end{proof}

Assume $f(p)=0$ where $p$ is the critical point. Let $\Sigma=f^{-1}(s)$ be a level set of $f$ for some $s>0$. 
Then $\Sigma$ is a compact, $O(2)$-symmetric, smooth 2D Riemannian manifold. Moreover, since the second fundamental form satisfies $\mathrm{I\!I}=-\left.\frac{\nabla ^2 f}{|\nabla f|}\right|_{\Sigma}=-\left.\frac{\Ric}{|\nabla f|}\right|_{\Sigma}\le 0$,
it follows by the Gauss equation that $\Sigma$ has positive Gaussian curvature.
Moreover, $\Sigma$ intersects $\Gamma$ at two different points.
In the following lemma, we obtain a lower bound on $R$ at the two points, in terms of the warping function at certain points in $\Sigma$.

\begin{lem}\label{l: geometry of level set near tip}
For any $D>0$, there exists $C(D)>0$ such that the following holds:

Let $(M,g,f,p)$ be a 3D steady gradient Ricci soliton with positive curvature.
Suppose $R(p)=1$. For any point $x\in M$ with $\varphi(x)\ge D^{-1}$, let $y$ be one of the two intersection points of $\Gamma$ and the level set $f^{-1}(f(x))$ that is closest to $x$ with respect to the induced metric on $\Sigma$.
Suppose $d_g(x,\Gamma)\ge4\,\varphi(x)$.
Then we have 
\begin{equation*}
    R(y)\ge C^{-1}\, \varphi^{-2}(x).
\end{equation*}
\end{lem}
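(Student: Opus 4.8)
The plan is to argue by contradiction and compactness, built on the surface‑of‑revolution structure of $\Sigma=f^{-1}(f(x))$. First I record the relevant geometry. Since $\Sigma$ is an $O(2)$‑symmetric sphere with positive Gaussian curvature (as observed just before the statement), in geodesic polar coordinates based at either of its two poles it is a surface of revolution $d\rho^2+\varphi^2 d\theta^2$, $\rho\in[0,L]$, where $\rho$ is arclength along the meridian $\Sigma\cap N$ and $\varphi$ is the restriction of the ambient warping function; moreover $\varphi(0)=0$, $\varphi'(0)=1$, and $\varphi''=-K_\Sigma\varphi\le 0$, so $\varphi$ is concave. Using the $\mathbb{Z}_2\times O(2)$‑symmetry of $M$, the reflection exchanging the two ends of $\Gamma$ restricts to an isometry of $\Sigma$ swapping its poles, so $\varphi(\rho)=\varphi(L-\rho)$ and hence $\varphi$ is increasing on $[0,L/2]$. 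Since $y$ is the pole closest to $x$, the meridian parameter $\rho_x=d_\Sigma(x,y)$ of $x$ satisfies $\rho_x\le L/2$, so $\varphi$ is increasing on $[0,\rho_x]$ with $\varphi(\rho_x)=\varphi(x)$.

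Next I isolate the key elementary inequality. From $d_g(x,\Gamma)\le d_\Sigma(x,y)=\rho_x$ and the hypothesis $d_g(x,\Gamma)\ge 4\varphi(x)$ we get $\rho_x\ge 4\varphi(\rho_x)$; since $\varphi$ is concave with $\varphi(0)=0$ this gives $\varphi'(\rho_x)\le\varphi(\rho_x)/\rho_x\le \tfrac14$, hence
\begin{equation*}
\int_0^{\rho_x}K_\Sigma\,\varphi\,d\rho=-\int_0^{\rho_x}\varphi''\,d\rho=1-\varphi'(\rho_x)\ge\tfrac34 ,
\end{equation*}
i.e. the geodesic disk of $\Sigma$ cut off by the latitude through $x$ has total Gaussian curvature $\ge 3\pi/2$. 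I will also use, at the pole $y$, the Gauss equation together with $\Ric\ge 0$ (so sectional curvatures and Ricci eigenvalues are $\le R$) and the soliton identity $|\nabla f|^2=R(p)-R=1-R$, which give $K_\Sigma(y)\le\tfrac12 R(y)+R(y)^2/(1-R(y))$, and the universal bound $|\nabla R|\le 2R$ (from $\nabla R=-2\,\Ric(\nabla f)$ and $\Ric\ge 0$).

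Now suppose the lemma fails for some $D>0$: there are $3$D steady gradient Ricci solitons $(M_i,g_i,f_i,p_i)$ with positive curvature, $R(p_i)=1$, and points $x_i$, $y_i$ as in the statement with $R(y_i)\varphi_i(x_i)^2\to 0$. Since $\varphi_i(x_i)\ge D^{-1}$ this forces $R(y_i)\to 0$; and since the normalized solitons subconverge to a non‑flat soliton by Lemma \ref{compactness to a steady soliton} (with $q_i=p_i$), the point $y_i$ must go to infinity along $\Gamma_i$, for otherwise $R$ would vanish at an interior point of the limit. Rescale by $\tilde g_i:=\varphi_i(x_i)^{-2}g_i$; then $\tilde\varphi_i(x_i)=1$, the meridian parameter of $x_i$ is $\tilde\rho_i\ge 4$, the inequality $\int_0^{\tilde\rho_i}K_{\tilde\Sigma_i}\tilde\varphi_i\,d\tilde\rho\ge\tfrac34$ persists (it is scale invariant), and $\tilde R(y_i)=R(y_i)\varphi_i(x_i)^2\to 0$. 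The claim is that $(M_i,\tilde g_i,y_i)$ subconverges smoothly to flat $\R^3$: the solitons are uniformly $\kappa$‑noncollapsed (scale invariant), and near $y_i$ the curvature is bounded — away from $\Gamma_i$ by Lemma \ref{l: curvature upper bound initial}, and in a tube around $\Gamma_i$ because $\tilde R$ decreases along $\Gamma_i$ away from $p_i$ and this bound propagates off $\Gamma_i$ via $|\nabla R|\le 2R$ — so Hamilton's compactness theorem applies; the limit $(M_\infty,\tilde g_\infty)$ has $\tilde R_\infty(y_\infty)=0$, and since $\tilde R_\infty\ge 0$ is $f$‑superharmonic the strong maximum principle forces $\tilde R_\infty\equiv 0$, so the limit is Ricci‑flat, hence flat $\R^3$, with the limiting $O(2)$‑action fixing a line $\Gamma_\infty$. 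The translated potentials $f_i-f_i(y_i)$ converge to an affine function $\tilde f_\infty$ with $|\nabla\tilde f_\infty|^2=\lim\varphi_i(x_i)^2>0$, so $\tilde\Sigma_\infty:=\tilde f_\infty^{-1}(0)$ is the hyperplane through $y_\infty$ orthogonal to $\Gamma_\infty$, on which the $O(2)$‑action is rotation about $y_\infty$; hence its warping function is $\varphi_{\tilde\Sigma_\infty}(\rho)=\rho$. Therefore $\tilde\Sigma_i\to\tilde\Sigma_\infty$ and $\tilde\varphi_i(\rho)\to\rho$ in $C^\infty_{loc}$. But $\tilde\varphi_i$ is increasing on $[0,\tilde\rho_i]$ by the first paragraph and $\tilde\rho_i\ge 4$, so $1=\tilde\varphi_i(\tilde\rho_i)\ge\tilde\varphi_i(4)\to 4$, a contradiction.

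The step I expect to be the main obstacle is the smooth convergence of $(M_i,\tilde g_i,y_i)$ to flat $\R^3$ — precisely, the uniform curvature bound near $y_i$ after rescaling. When $\varphi_i(x_i)$ stays bounded this is immediate from $\tilde R\le\varphi_i(x_i)^2R\le\varphi_i(x_i)^2$, and the argument above is complete. When $\varphi_i(x_i)\to\infty$ the rescaling inflates the soliton constant and destroys the global bound, so one must argue differently — for instance, by first using the uniform dimension reduction (Lemma \ref{l: looks like RxCigar}) to show that a point $x_i$ with $\varphi_i(x_i)$ large and $d_{g_i}(x_i,\Gamma_i)\ge 4\varphi_i(x_i)$ necessarily sits in a region modeled on a cylindrical end of $\R\times\cigar$ whose tip corresponds to $y_i$, which already pins $R(y_i)\varphi_i(x_i)^2$ to a definite positive value; making this precise is the technical core. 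A secondary point, used in the final line, is the $\mathbb{Z}_2$‑symmetry: it guarantees that the closest pole $y$ lies on the same side of the equator of $\Sigma$ as $x$, so that $\varphi$ is monotone along the meridian from $y$ to $x$ and the comparison $\tilde\varphi_i(\tilde\rho_i)\ge\tilde\varphi_i(4)$ is valid.
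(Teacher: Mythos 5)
Your overall strategy coincides with the paper's: argue by contradiction, rescale by $\varphi^{-2}(x_i)$, show the blow-up limit at $y_i$ is flat $\R^3$ with a non-constant linear potential, so the level set converges to a flat plane whose warping function is $\varphi_\infty(\rho)=\rho$, and contradict this with the bound $\varphi\le 2\varphi(x)$ along a meridian segment of $\Sigma$-length at least $4\varphi(x)$. However, the step you yourself flag as "the technical core" is a genuine gap, and it is precisely the step the paper disposes of by invoking the limiting argument of \cite[Lemma 3.3]{Lai2020_flying_wing}: the smooth convergence of $(M_i,\varphi_i^{-2}(x_i)g_i,y_i)$ to Euclidean space. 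Your curvature bound near $\Gamma_i$ only works on the side of $y_i$ pointing away from $p_i$ (where $R\le R(y_i)$ by monotonicity); on the side toward $p_i$ the ball $B_{\tilde g_i}(y_i,A)$ contains $\Gamma_i([s_i-A\varphi_i(x_i),s_i])$, and when $\varphi_i(x_i)\to\infty$ neither monotonicity nor $|\nabla R|\le 2R$ (which gives only exponential control over a distance tending to infinity) bounds $R$ there by a multiple of $R(y_i)$. Since the lemma imposes only a lower bound $\varphi(x)\ge D^{-1}$ and no upper bound, the unbounded case cannot be discarded, and your proposed detour through the uniform dimension reduction (Lemma \ref{l: looks like RxCigar}) is only a sketch. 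As written, the proof is incomplete at exactly this point.

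A secondary issue: you invoke the $\mathbb{Z}_2$-symmetry to get monotonicity of $\varphi$ along the meridian from $y$ to $x$, but the lemma does not assume $\mathbb{Z}_2$-symmetry (that standing assumption is introduced only for the results following this one). This is easily repaired and in fact unnecessary: since $\varphi$ is concave on the meridian $[0,L]$ with $\varphi(0)=\varphi(L)=0$ and $\rho_x=d_\Sigma(x,y)\le L/2$ because $y$ is the nearer pole, concavity alone gives $\varphi(\rho)\le\varphi(\rho_x)\,\frac{L-\rho}{L-\rho_x}\le 2\varphi(\rho_x)$ for $\rho\in[0,\rho_x]$, which is the bound the paper uses; in the limit this yields $\varphi_\infty(\rho)\le 2$ for $\rho\le 4$ against $\varphi_\infty(\rho)=\rho$, the same contradiction without any monotonicity. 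Your Gauss--Bonnet computation $\int_0^{\rho_x}K_\Sigma\varphi\,d\rho\ge\tfrac34$ is correct but is not used in your final contradiction and can be dropped.
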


\begin{proof}
Suppose this is false, then we can find a sequence of 3D steady gradient Ricci solitons  $(M_i,g_i,f_i,p_i)$ on $\R^3$, with $R(p_i)=1$, and $x_i,y_i\in M_i$ satisfying the assumptions, but
\begin{equation}\label{e: R and varphi}
    R(y_i)\varphi_i^{2}(x_i)\ri0\quad\textit{as}\quad i\rii.
\end{equation}
By $\varphi_i(x_i)\ge D^{-1}$ we have $R(y_i)\ri0$ as $i\rii$. So we may assume $|\nabla f_i|\ge\frac{1}{2}$ for all $i$.
Consider the rescaled metrics $\widetilde{g}_i=\varphi_i^{-2}(x_i)g$ and the rescaled functions $\widetilde{f}_i:=\frac{f_i-f_i(y_i)}{\varphi_i(x_i)}$, then $\widetilde{f}_i$ satisfies $\widetilde{f}_i(y_i)=0$ and
\begin{equation}\label{e: higher derivatives of f}
    \widetilde{\nabla}^2\widetilde{f}_i=\nabla^2\widetilde{f}_i=\frac{\nabla^2 f_i}{\varphi_i(x_i)}=\frac{\Ric}{\varphi_i(x_i)}=\frac{\widetilde{\Ric}}{\varphi_i(x_i)},
\end{equation}
and also $|\widetilde{\nabla}\widetilde{f}_i|_{\widetilde{g}_i}=
    \varphi_i(x_i)|\nabla\widetilde{f}_i|_{g_i}=
    |\nabla f_i|_{g_i}\le 1$.
In particular, at $y_i$ we have
\begin{equation}\label{e: the norm of gradient}
    |\widetilde{\nabla}\widetilde{f}_i|_{\widetilde{g}_i}(y_i)=|\nabla f_i|(y_i)\in\left[\frac{1}{2},1\right].
\end{equation}
By \eqref{e: higher derivatives of f} and \eqref{e: the norm of gradient}, the derivatives of $\widetilde{f}_i$ are uniformly bounded.
Using \eqref{e: R and varphi} we can show by a limiting argument as in \cite[Lemma 3.3]{Lai2020_flying_wing} that
the manifolds $(M_i,\varphi^{-2}_i(x_i)g_i,y_i)$ smoothly converge to the 3D Euclidean space $(\R^3,0)$.

Moreover, a subsequence of the functions $\widetilde{f}_i$ converge to a smooth function $f_{\infty}$ on $\R^3$ with $f_{\infty}(0)=0$. 
By \eqref{e: higher derivatives of f} and \eqref{e: the norm of gradient} it satisfies $\nabla^2f_{\infty}=0$ and $|\nabla f_{\infty}|(0)>0$. 
So $f_{\infty}$ is a non-constant linear function.
In particular, $0$ is a regular value of $f_{\infty}$, so the 
the level sets $(\Sigma_i,R(y_i)g_{\Sigma_i},y_i)$ of $\widetilde{f}_i$ with induced metrics smoothly converge to the level set $(f_{\infty}^{-1}(0),0)$,
which is isometric to the 2D Euclidean space with the induced metric.

Let $\sigma_i:[0,1]\ri\Sigma_i$ be a $\Sigma_i$-minimizing geodesic from $y_i$ to $x_i$.
Since $y_i$ is closer to $x_i$ between the two points in $\Gamma_i\cap \Sigma_i$, by the concavity of $\varphi_i$ on $\Sigma_i$, it is easy to see 
\begin{equation}\label{e: largest circle}
    \sup_{s\in[0,1]}\varphi_i(\sigma_i(s))\le 2\varphi_i(x_i).
\end{equation}
By the assumption $d_{g_i}(x,\Gamma_i)\ge 4\,\varphi_i(x_i)$, we have that
\begin{equation}\label{e: distance of the circle is not too small compare to the circle length}
    d_{\Sigma_i}(y_i,x_i)\ge d_{g_i}(y_i,x_i)\ge d_{g_i}(x_i,\Gamma_i) \ge 4\,\varphi_i(x_i).
\end{equation}
Write the induced metric on $f^{-1}_{\infty}(0)$ in the warped product form $dr^2+\varphi_{\infty}^2(r)d\theta^2$ so that $r=0$ at $0\in f^{-1}_{\infty}(0)$.
Then $\varphi_{\infty}(r)=r$ because $f^{-1}_{\infty}(0)$ is isometric to $\R^2$.
But \eqref{e: largest circle} and \eqref{e: distance of the circle is not too small compare to the circle length} imply that $\varphi_{\infty}(r)\le2$ for all $r\le4$, which is a contradiction.
\end{proof}

In some of the following results,
we will moreover assume that the soliton is $\mathbb{Z}_2$-symmetric for simplicity, by which we mean that there is a $\mathbb{Z}_2$-isometry $\tau$ on $M$, which fix the critical point $p$ and the differential map $\tau_{*p}$ is a reflection in $T_pM$ that maps the vector $\Gamma'(0)$ to $-\Gamma'(0)$.

The next lemma shows that if $R\ge R_0$ at some point on $\Gamma$ that is sufficiently far away from the critical point. Then $R$ has a uniform lower bound at the infinity of $\Gamma$, which only depends only on $R_0$.
Lemma \ref{l: looks like RxCigar} is needed in the proof: First,
it allows us to reduce the estimate of $R$ to that of the warping function.
Second, it implies the initial condition needed to apply the ODE estimate for the warping function. So we can obtain an upper bound on the warping function, which in turn implies a lower bound on $R$.

\begin{lem}\label{l: a rough lower bound C}
For any $R_0\in(0,1)>0$, there exist $D(R_0),C(R_0)>0$ such that the following holds:

Let $(M,g,f,p)$ be a $\sy$-symmetric 3D steady gradient Ricci soliton with positive curvature.
Suppose $R(p)=1$. 
Suppose also that there is $s_0>D(R_0,\epsilon)$ such that $R(\Gamma(s_0))\ge R_0$.
Then for all $|s|\ge s_0$,
\begin{equation*}
    R(\Gamma(s))\ge C^{-1}>0.
\end{equation*}
\end{lem}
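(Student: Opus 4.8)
The plan is to reduce the statement, via the uniform dimension reduction (Lemma \ref{l: looks like RxCigar}), to an ODE estimate for the warping function $\varphi$ of the $O(2)$-invariant metric along $\Gamma$, and then to run that ODE estimate forward from $s_0$ to all larger $s$. First I would argue by contradiction: suppose there is a sequence of $\sym$-symmetric 3D steady gradient Ricci solitons $(M_i,g_i,f_i,p_i)$ with $R(p_i)=1$ and points $\Gamma_i(s_{0,i})$ with $s_{0,i}\to\infty$ (we may assume $s_{0,i}>D_i$ for any prescribed $D_i\to\infty$) and $R(\Gamma_i(s_{0,i}))\ge R_0$, but $\inf_{|s|\ge s_{0,i}}R(\Gamma_i(s))\to 0$. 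Since $R(\Gamma_i(s))$ is monotone in $|s|$ (as recalled at the start of Section 3), the infimum over $|s|\ge s_{0,i}$ is the limiting value $\lim_{s\to\infty}R(\Gamma_i(s))$, which by Lemma \ref{l: quantitative relation} equals $\sin^2(\alpha_i/2)$, so the contradiction hypothesis says $\alpha_i\to 0$. The task is therefore to get a uniform lower bound $C^{-1}$ on $\lim_{s\to\infty}R(\Gamma_i(s))$ given $R(\Gamma_i(s_{0,i}))\ge R_0$.

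The mechanism is the following. By Lemma \ref{l: looks like RxCigar} applied with this $R_0$ and a small $\epsilon$, since $R(\Gamma_i(s_{0,i}))\ge R_0$ and $s_{0,i}>D(R_0,\epsilon)$ for large $i$, the rescaled pointed manifold $(M_i,R(\Gamma_i(s_{0,i}))g_i,\Gamma_i(s_{0,i}))$ is $\epsilon$-close to $(\R\times\cigar,(0,x_{tip}))$. This gives two pieces of data at $s_{0,i}$: (a) the geometry transverse to $\Gamma_i$ looks like $\cigar$, so $R$ and the warping function $\varphi_i$ along $\Gamma_i$ determine each other up to controlled error near $s_{0,i}$ — in particular $\varphi_i(\Gamma_i(s_{0,i}))$ is comparable (after rescaling) to the cigar's width, giving the \emph{initial value} for the ODE; and (b) the \emph{initial derivative} data, i.e.\ $\varphi_i'$ along $\Gamma_i$ at $s_{0,i}$ is close to the corresponding quantity on $\R\times\cigar$, which is the ingredient needed to start the ODE comparison. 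I would then invoke the distance distortion / warping-function ODE estimates from \cite{Lai2022_O(2)}: along $\Gamma$ the function $\varphi$ satisfies a second-order ODE (coming from the soliton and warped-product structure) for which one has an a priori upper bound of the form $\varphi(\Gamma(s))\le \Lambda$ for all $s\ge s_0$, with $\Lambda$ depending only on the initial data at $s_0$, hence only on $R_0$ (and $\epsilon$, which is fixed). Since on a region that stays close to $\R\times\cigar$ one has $R\asymp \varphi^{-2}$, a uniform upper bound on $\varphi$ translates into the desired uniform lower bound $R(\Gamma(s))\ge C(R_0)^{-1}$. The $\mathbb{Z}_2$-symmetry lets us treat $s\le -s_0$ identically, and for $|s|\le s_0$ the monotonicity of $R$ along $\Gamma$ gives $R(\Gamma(s))\ge R(p)\sin^2(\alpha/2)\ge$ the same bound (or one simply notes $R$ on $[-s_0,s_0]$ is bounded below by its value at $\pm s_0$).

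The main obstacle I expect is (b): transferring the \emph{derivative} information about the warping function across the dimension-reduction limit and feeding it correctly into the ODE estimate of \cite{Lai2022_O(2)}. The $\epsilon$-closeness in Definition \ref{d: epsilon isometry} controls derivatives of the metric up to order $[\epsilon^{-1}]$, so $C^1$-control of $\varphi_i$ along $\Gamma_i$ at the base point is available; the delicate point is that the ODE for $\varphi$ on $\R\times\cigar$ is the boundary case between the ``wing'' and the ``Bryant'' behaviors, so one must check that the $\epsilon$-error does not push the initial condition onto the wrong side of the comparison and cause $\varphi$ to blow up before reaching infinity. This is exactly where Lemma \ref{l: looks like RxCigar} being \emph{uniform} (and the resulting closeness being at a definite, $R_0$-dependent scale and location) is essential, and where I would be most careful to quote the precise form of the distance distortion estimate from \cite{Lai2022_O(2)} rather than re-deriving it. A secondary technical point is ensuring $R$ stays close to $\varphi^{-2}$ not just near $s_0$ but for all $s\ge s_0$ — this should follow because once $\varphi$ is bounded above, the transverse cross-sections along $\Gamma$ remain uniformly non-degenerate and one can re-apply the dimension reduction at each later point, but the bookkeeping needs care.
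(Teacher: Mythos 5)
Your high-level strategy --- uniform dimension reduction at $\Gamma(s_0)$, then a warping-function/distance-distortion ODE estimate, then conversion back to a lower bound on $R$ --- matches the paper's, but the two steps that carry the actual content are not correctly set up, and the gaps are genuine. First, there is no ``second-order ODE for $\varphi$ along $\Gamma$'': $\varphi$ vanishes on $\Gamma$, since $\Gamma$ is the fixed-point set of the $O(2)$-action. The quantity that obeys a usable differential inequality is the \emph{pair} $H(t)=d_g(\phi_t(x),\Gamma)$, $h(t)=\varphi(\phi_t(x))$ for a point $x$ \emph{off} $\Gamma$ supplied by the $\epsilon$-closeness to $\R\times\cigar$ (chosen with $\varphi(x)\approx 2R_0^{-1/2}$ and $d_g(x,\Gamma)\ge C_0\,\varphi(x)$), evolved along the flow of $\nabla f$. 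One derives the coupled system $H'\ge C_1^{-1}h^{-1}$, $h'\le C_2H^{-2}h$ --- the first from a Ricci lower bound near $\Gamma$, the second from the Ricci flow equation together with quadratic curvature decay (Lemma \ref{l: curvature upper bound initial}) --- and only then does the ODE lemma of \cite{Lai2022_O(2)} give linear growth of $H$ and boundedness of $h$. A single scalar ODE for $\varphi$ at $\Gamma(s)$ has nothing to run on.

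Second, and more seriously, your route back from ``$\varphi\le\Lambda$'' to ``$R(\Gamma(s))\ge C^{-1}$'' is circular: re-applying Lemma \ref{l: looks like RxCigar} at later points requires the hypothesis $R(\Gamma(s))\ge R_0$, which is precisely what is to be proved for $s>s_0$. You correctly flagged this as the delicate point, but flagging it does not resolve it. The non-circular bridge is Lemma \ref{l: geometry of level set near tip}: if $\varphi(x)$ is bounded below and $d_g(x,\Gamma)\ge 4\varphi(x)$, then $R\ge C^{-1}\varphi^{-2}(x)$ at the nearest intersection of $\Gamma$ with the level set $f^{-1}(f(x))$, with no closeness to $\R\times\cigar$ assumed. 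That lemma is also what supplies the Ricci lower bound needed for $H'\ge C_1^{-1}h^{-1}$, so it enters the argument twice; without it (or an equivalent substitute) your proof does not close.
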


\begin{proof}
We can write the metric $g$ on $M\setminus\Gamma$ as $g=g_N+\varphi^2d\theta$ where $N$ is a totally geodesic surface.
Let $N_0$ be the 2D submanifold fixed by the $\mathbb{Z}_2$-isometry, then $M\setminus N_0$ has two connected components $N_+,N_-$. It is not hard to see that for any point $x\in N_+$ (or $N_-$), the point $\phi_t(x)\in N_+$ (or $N_-$) for all $t\in\R$. Without loss of generality we may assume $\Gamma_+=\Gamma(0,\infty)\subset N_+$. So $d_g(x,\Gamma)=d_g(x,\Gamma_+)$ when $x\in N_+$.

In the following $C$ denotes all positive constants depending only on $R_0$, whose values may change from line to line.
Let $C_0>10$ be a constant whose value will be determined later. Let $\epsilon>0$ be sufficiently small.
By Lemma \ref{l: looks like RxCigar} we can find constant $D>0$ depending on $R_0$ and $\epsilon$, so that $(M,R(\Gamma(s_0))g,\Gamma(s_0))$ is $\epsilon$-close to $\R\times\cigar$. In particular, we can find a point $x$ such that $d_g(x,\Gamma)=d_g(x,\Gamma_+)\ge C_0\,\varphi(x)$
and also 
\begin{equation}\label{e: h(0)}
  1\le 1.9\,R_0^{-1/2}\le \varphi(x)\le 2.1\,R_0^{-1/2}.
\end{equation}
Let $H(t)=d_g(\phi_t(x),\Gamma)$ and $h(t)=\varphi(\phi_t(x))$. Then we have
\begin{equation}
    \frac{H(0)}{h(0)}=\frac{d_g(x,\Gamma)}{\varphi(x)}\ge C_0.
\end{equation}
Let
\begin{equation*}
    a=\sup\{t\ge0: H(t)\ge4\,h(t)\}\in(0,\infty].
\end{equation*}
We will show $a=\infty$. 

First, since $x\notin\Gamma$, it follows that $\phi_t(x)\notin\Gamma$. In particular, $f(\phi_t(x))>0$ and the level set $\Sigma_t=f^{-1}(f(\phi_t(x)))$ for each $t\ge0$ is a compact 2D submanifold. Assume $\Sigma_t$ intersects with $\Gamma_+$ at $y_t$. 
Since $\phi_t(x)\in N_+$, there is a point $q_t\in\Gamma_+$ such that $d_g(q_t,\phi_t(x))=d_g(\phi_t(x),\Gamma)$. Then by \cite[Lemma 3.29]{Lai2022_O(2)} we see that $f(q_t)\le f(\phi_t(x))=f(y_t)$.
The conditions $d_{\Sigma_t}(\phi_t(x),\Gamma\cap\Sigma_t)=d_{\Sigma_t}(\phi_t(x),y_t)$, $\varphi(\phi_t(x))\ge\varphi(x)\ge 1$, and $H(t)\ge4h(t)$ allow us to apply
Lemma \ref{l: geometry of level set near tip} and deduce $R(y_t)\ge C^{-1}h^{-2}(t)$. Then by the monotonicity of $R$ along $\Gamma_+$, we see that the following holds in $[0,a]$,
\begin{equation}\label{e: R(q)}
    R(q_t)\ge R(y_t)\ge C^{-1}h^{-2}(t).
\end{equation}

For any point $\Gamma(s)\in\Gamma$, let $e_1,e_2\in T_{\Gamma(s)}M$ be two unit vectors which together with $e_3:=\Gamma'(s)$ form an orthogonal basis.
Then by the $O(2)$-symmetry we have $R=4K(e_1,e_2)+2K(e_1,e_3)$. Together with $\Ric(e_1,e_1)=K(e_1,e_2)+K(e_1,e_3)$ this implies
\begin{equation}\label{e: quarter}
    \Ric(e_1,e_1)=\Ric(e_2,e_2)\ge\frac{1}{4}R.
\end{equation}
Therefore, 
let $\gamma_t:[0,H(t)]\ri M$ be a minimizing geodesic connecting $q_t$ and $\phi_t(x)$, then by \eqref{e: quarter}, \eqref{e: R(q)}, and the uniform bound on the curvature derivatives we have
\begin{equation*}
    \Ric(\gamma_t'(r),\gamma_t'(r))\ge C^{-1}\cdot h^{-2}(t)
\end{equation*}
for all $r\in[0,C^{-1}h(t)]$.
Since $H(t)\ge4\,h(t)$, it follows that
\begin{equation*}
    H'(t)=\int_0^{H(t)}\Ric(\gamma_t'(r),\gamma_t'(r))\,dr\ge C^{-1}\cdot h^{-1}(t).
\end{equation*}
So there are constants $C_1,C_2>0$ that only depend on $R_0$ such that the following inequalities hold for all $t\in[0,a]$,
\begin{equation}\label{e: ODE derivative assump}
    \begin{cases}
     H'(t)\ge C_1^{-1}\cdot h^{-1}(t)\\
     h'(t)\le C_2\cdot H^{-2}(t)\cdot h(t),
    \end{cases}
\end{equation}
where the second inequality is a consequence of the Ricci flow equation and Lemma \ref{l: curvature upper bound initial}.

We may assume $\frac{2C_1C_2}{e}\ge5$ and take $C_0=2C_1C_2$, then it follows by the ODE estimates \cite[Lemma 3.37]{Lai2022_O(2)} that
\begin{equation}\label{e: caseHh}
    \begin{cases}
    H(t)\ge C_3t+H(0)\\
    h(t)\le h(0)e^{\frac{C_2}{C_3H(0)}},
    \end{cases}
\end{equation}
for all $t\in[0,a]$, where $C_3=C_1^{-1} h^{-1}(0)-C_2H^{-1}(0)>0$.
So $h(t)\le h(0)\,e$, and hence $\frac{H(t)}{h(t)}\ge\frac{1}{e}\frac{H(0)}{h(0)}\ge5$.
By the supremum of $a$ this implies $a=\infty$. 
So by \eqref{e: R(q)} and \eqref{e: h(0)} we obtain $R(q_t)\ge C^{-1}$
for all $t\ge0$. Note that by \cite[Lemma 3.19]{Lai2022_O(2)} we have that $q_t\rii$ as $t\rii$.
So this implies
$\lim_{s\rii}R(\Gamma(s))\ge C^{-1}$, and thus proves the lemma.

\end{proof}

The next lemma shows that the distance between any two points that are not on $\Gamma$ will stay bounded under the backwards Ricci flow, see also \cite[Theorem 3.39]{Lai2022_O(2)}.

\begin{lem}\label{l: two points stay bounded}
Let $(M,g,f,p)$ be a 3d steady gradient Ricci soliton on $\R^3$. Then for any $x_1,x_2\in M\setminus\Gamma$, there exists $C>0$ (which may depend on $x_1,x_2$ and $(M,g)$) such that $d_{g(t)}(x_1,x_2)=d_g(\phi_t(x_1),\phi_t(x_2))<C$ for all $t\ge0$.
\end{lem}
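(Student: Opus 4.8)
The plan is to bound directly the quantity $\rho(t):=d_{g(t)}(x_1,x_2)=d_g(\phi_t(x_1),\phi_t(x_2))$. Since $g(t)=\phi_t^*g$ solves the backwards Ricci flow $\partial_t g(t)=2\,\Ric_{g(t)}$ and $\Ric\ge 0$ on a $3$D steady soliton, $\rho$ is non-decreasing, so it is enough to show $\int_0^{\infty}\frac{d^+}{dt}\rho(t)\,dt<\infty$. Let $\sigma_t:[0,\rho(t)]\ri M$ be a unit-speed $g$-minimizing geodesic from $\phi_t(x_1)$ to $\phi_t(x_2)$. Comparing $\rho(t+h)$ with the length of $\phi_h\circ\sigma_t$ and applying the first variation of arclength, with variation field $\nabla f$ along $\sigma_t$, together with $\nabla_{\sigma_t'}\sigma_t'=0$ and the soliton equation $\Ric=\nabla^2 f$, I get
\begin{equation*}
\frac{d^+}{dt}\rho(t)\ \le\ \langle\nabla f,\sigma_t'\rangle\big|_{\phi_t(x_2)}-\langle\nabla f,\sigma_t'\rangle\big|_{\phi_t(x_1)}\ =\ \int_0^{\rho(t)}\Ric(\sigma_t',\sigma_t')\,ds\ \le\ \int_{\sigma_t}R\,d\ell ,
\end{equation*}
using $\Ric\le R\,g$ in dimension $3$. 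Thus the problem reduces to showing that the total scalar curvature $\int_{\sigma_t}R\,d\ell$ along these minimizing geodesics decays fast enough in $t$ to be integrable.

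To estimate $\int_{\sigma_t}R\,d\ell$ I would use the quadratic curvature decay $R(x)\le C\,d_g(x,\Gamma)^{-2}$ of Lemma~\ref{l: curvature upper bound initial} together with the fact that the flow carries $x_1$ and $x_2$ off to infinity at a definite rate away from $\Gamma$: as $t\ri\infty$ the $O(2)$-orbit through $\phi_t(x_i)$ stays of bounded length while $d_g(\phi_t(x_i),\Gamma)$ grows — for a flying wing it grows linearly, since the asymptotic cone is a genuine sector and the soliton dimension-reduces along the flow lines off $\Gamma$ to a flat model — so $d_g(\phi_t(x_i),\Gamma)\ge c\,t$ for some $c>0$ and all $t\ge t_0$. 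If one knew $\rho(t)\le A$ for all $t$, then every point of $\sigma_t$ would lie within $A$ of $\phi_t(x_1)$ and hence at distance $\ge ct-A\ge\frac{c}{2}t$ from $\Gamma$ for large $t$, giving $\int_{\sigma_t}R\,d\ell\le C\,\rho(t)\,(\tfrac{c}{2}t)^{-2}\le C't^{-2}$; feeding this back into the displayed inequality yields $\rho(t)\le\rho(t_1)+\int_{t_1}^{\infty}C's^{-2}\,ds<\infty$, and the resulting constant depends only on $x_1,x_2$, on $(M,g)$ through $t_0,c,R(p)$, and on $\rho(0)$.

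The point needing care is removing the a priori assumption $\rho\le A$; I would do this by a continuity/bootstrap argument on the largest interval on which $\rho\le A$, controlling the complementary (bounded, initial) part only by the crude bound $\frac{d^+}{dt}\rho(t)\le 2\sqrt{R(p)}$ coming from $|\nabla f|\le\sqrt{R(p)}$, and then taking $A$ large. I expect the genuine obstacle to be the geometric input behind the linear (or at least faster-than-$\sqrt t$) escape of $\phi_t(x_i)$, and of the whole geodesic $\sigma_t$, from $\Gamma$: this is exactly where the asymptotic structure of $3$D steady solitons — the $\R\times\cigar$ dimension reduction and the distance-distortion estimates of \cite{Lai2022_O(2)} (cf. \cite[Theorem~3.39]{Lai2022_O(2)}) — must be used, rather than the soft monotonicity of distances alone; the hypothesis $x_1,x_2\notin\Gamma$ enters precisely so that the flow lines through them run off into this region where $R$ decays.
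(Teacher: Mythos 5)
Your overall skeleton is the same as the paper's: bound $\frac{d}{dt}d_g(\phi_t(x_1),\phi_t(x_2))$ by $\int_{\sigma_t}\Ric(\sigma_t',\sigma_t')\,d\ell$, combine the linear escape $d_g(\phi_t(x_j),\Gamma)\ge d_g(x_j,\Gamma)+C^{-1}t$ (the input from \cite[Theorem 1.5]{Lai2022_O(2)}) with the quadratic decay $R\le C\,d_g(\cdot,\Gamma)^{-2}$ of Lemma \ref{l: curvature upper bound initial}, and conclude that the derivative is integrable in $t$. The one genuinely delicate point is exactly the one you flag: you must know that the \emph{whole} minimizing geodesic $\sigma_t$, not just its endpoints, stays far from $\Gamma$, and for that you need some a priori control on $\rho(t)=d_g(\phi_t(x_1),\phi_t(x_2))$.

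Your proposed bootstrap does not close as written. Suppose $\rho\le A$ on a maximal interval. The decay $\int_{\sigma_t}R\,d\ell\le C'\,t^{-2}$ only becomes available once $ct-A\ge \tfrac{c}{2}t$, i.e.\ for $t\ge t_0= 2A/c$, and on the initial interval $[0,t_0]$ you only have the crude bound $\frac{d^+}{dt}\rho\le 2\sup|\nabla f|\le 2\sqrt{R(p)}$, which contributes up to $2\sqrt{R(p)}\,t_0= 4\sqrt{R(p)}\,A/c$ to $\rho(t_0)$. The bootstrap inequality $\rho(0)+4\sqrt{R(p)}A/c+C'/t_0<A$ then requires $c>4\sqrt{R(p)}$; but the escape rate $c$ satisfies $c\le\sqrt{R(p)}$ (the integral curves of $\nabla f$ move with speed $|\nabla f|\le\sqrt{R(p)}$), and in general $c$ is much smaller. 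So "taking $A$ large" does not help: the loss on the initial interval grows linearly in $A$ with a coefficient that you cannot make less than $1$. The paper avoids the bootstrap entirely with a two-step argument: first it applies Perelman's distance distortion estimate \cite[8.3(b)]{Pel1}, which needs curvature bounds only on balls of controlled radius around the two \emph{endpoints} (where the linear escape plus quadratic decay give $R\le C(t+1)^{-2}$), to obtain the unconditional intermediate bound
\begin{equation*}
\rho(t)\le \rho(0)+C\ln(t+1)\quad\text{for all }t\ge0 .
\end{equation*}
Since $\ln(t+1)=o(t)$, this already forces every point of $\sigma_t$ to lie at distance $\ge C^{-1}(t+1)$ from $\Gamma$ for large $t$, whence $\frac{d}{dt}\rho\le C\ln(t+1)\,(t+1)^{-2}$, which is integrable. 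If you want to keep your structure, replace the crude-bound-plus-continuity step by this logarithmic a priori estimate (or by any other device giving $\rho(t)=o(t)$); without it the argument has a genuine gap. The remaining ingredient you use, the linear lower bound $d_g(\phi_t(x_j),\Gamma)\ge d_g(x_j,\Gamma)+C^{-1}t$, is correctly identified as the external input from \cite{Lai2022_O(2)}, though your heuristic justification via the sector asymptotics should simply be replaced by that citation, since the lemma is stated for all $3$D steady solitons on $\R^3$, not only flying wings.
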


\begin{proof}
Let $C>0$ denote all constants whose values may change from line to line.
First, by $\Rm\ge0$ we see that $d_g(\phi_t(x_j),\Gamma)$ increases in $t$, for $j=1,2$. Moreover, by \cite[Theorem 1.5]{Lai2022_O(2)} it is not hard to see that
\begin{equation}\label{e: grows linearly}
    d_g(\phi_t(x_j),\Gamma)\ge d_g(x_j,\Gamma)+C^{-1}t\ge C^{-1}(t+1).
\end{equation}
So by Lemma \ref{l: curvature upper bound initial} (quadratic curvature decay) we see that $R(x)\le\frac{C}{(t+1)^2}$ holds for all $x\in B_g(x_j,C^{-1}(t+1))$, for each $j=1,2$. So by Perelman's distance distortion estimate \cite[8.3(b)]{Pel1} we have
\begin{equation*}
    \frac{d}{dt}d_g(\phi_t(x_1),\phi_t(x_2))\le\frac{C}{t+1},
\end{equation*}
integrating which we obtain
\begin{equation}\label{e: lnt}
    d_g(\phi_t(x_1),\phi_t(x_2))\le d_g(x_1,x_2)+C\ln (t+1).
\end{equation}
Therefore, for any sufficiently large $t$, let $\gamma_t:[0,1]\ri M$ be a minimizing geodesic between $\phi_t(x_1),\phi_t(x_2)$, by \eqref{e: grows linearly} and the triangle inequality we have
\begin{equation*}
    d_g(\gamma_t([0,1],\Gamma)\ge d_g(\phi_t(x_1),\Gamma)-d_g(\phi_t(x_1),\phi_t(x_2))> C^{-1}(t+1).
\end{equation*}
So by Lemma \ref{l: curvature upper bound initial} we have $\sup_{s\in[0,1]}R(\gamma_t(s))\le \frac{C}{(t+1)^2}$, and hence \eqref{e: lnt} implies
\begin{equation*}
    \frac{d}{dt}d_g(\phi_t(x_1),\phi_t(x_2))\le\int_{\gamma}\Ric(\gamma_t'(s),\gamma_t'(s))\,ds\le \frac{C\ln (t+1)}{(t+1)^2}\le \frac{C}{(t+1)^{\frac{3}{2}}},
\end{equation*}
integrating which we proved the lemma.
\end{proof}

Lastly, we prove the main result in this section, which gives a condition for $R$ to be stable along $\Gamma$.
More precisely, it says that if $R(\Gamma(s_0))\ge R_0$ for some sufficiently large $s_0$ depending on $R_0$, then the value of $R(\Gamma(s))$ barely drops on $s\in [s_0,\infty)$. So $\lim_{s\rii}R(\Gamma(s))$ is sufficiently close to $R_0$.
The proof relies on Lemma \ref{l: looks like RxCigar} which allows us to convert the comparison of $R$ to that of the warping functions.

\begin{prop}\label{l: R does not change too much}
For any $R_{\#}\in(0,1],\epsilon>0$, there exists $D(R_{\#},\epsilon)>0$ such that the following holds:

Let $(M,g,f,p)$ be a $\sy$-symmetric 3D steady gradient Ricci soliton with positive curvature.
Suppose $R(p)=1$. 
Suppose also that there is $s_0>D(R_{\#},\epsilon)$ such that $R(\Gamma(s_0))=R_0\ge R_{\#}$.
Then for all $s\in\R$, $|s|\ge s_0$, we have
\begin{equation*}
    R_0(1-\epsilon)\le R(\Gamma(s))\le R_0.
\end{equation*}
\end{prop}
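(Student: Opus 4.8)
\emph{Reduction.} Since $R(\Gamma(s))$ is nonincreasing on $[0,\infty)$, the upper bound $R(\Gamma(s))\le R(\Gamma(s_0))=R_0$ is immediate for $s\ge s_0$, and by the $\mathbb{Z}_2$-symmetry (which sends $\Gamma(s)$ to $\Gamma(-s)$) the same holds for $s\le -s_0$. Moreover, again by monotonicity, $R(\Gamma(s))\ge R_\infty:=\lim_{s\rii}R(\Gamma(s))$ for all $s\ge s_0$, and $R(\Gamma(-s))=R(\Gamma(s))$; hence it suffices to prove $R_\infty\ge R_0(1-\epsilon)$. I also record that, since $R_0\ge R_\#$ and $s_0$ is large, Lemma~\ref{l: a rough lower bound C} already gives a rough bound $R(\Gamma(s))\ge C^{-1}(R_\#)>0$ for all $|s|\ge s_0$; in particular $R_\infty\ge C^{-1}(R_\#)>0$.

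\emph{From $R$ to the warping function.} Write $g=g_N+\varphi^2 d\theta^2$ on $M\setminus\Gamma$. Fix a small $\delta=\delta(\epsilon)>0$ and a large $C_0=C_0(\epsilon,R_\#)>0$, to be chosen. Applying Lemma~\ref{l: looks like RxCigar} at $\Gamma(s_0)$ (legitimate once $s_0>D(R_\#,\delta)$, taking $D(\cdot,\delta)$ monotone in the first slot so $D(R_\#,\delta)$ bounds $D(R_0,\delta)$ for $R_0\in[R_\#,1]$), the manifold $(M,R_0g,\Gamma(s_0))$ is $\delta$-close to $(\R\times\cigar,(0,x_{tip}))$. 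In $\R\times\cigar$ (normalized so $R\equiv1$ on the axis, so the meridian warping function $\psi$ satisfies $\psi\nearrow 2$) pick a point in the cylindrical part at distance $\rho_*$ from the axis with $\psi(\rho_*)$ within $\epsilon^{10}$ of $2$ and $\rho_*/\psi(\rho_*)\ge 2C_0$; transporting it back by the $\delta$-isometry (with $\delta$ small enough that $\rho_*<\delta^{-1}$) yields a point $x\in M$ with
\begin{equation*}
    d_g(x,\Gamma)\ge C_0\,\varphi(x),\qquad \varphi(x)^2R_0\in[4-\epsilon,4+\epsilon].
\end{equation*}
Now run the ODE estimate exactly as in the proof of Lemma~\ref{l: a rough lower bound C}, with $H(t)=d_g(\phi_t(x),\Gamma)$ and $h(t)=\varphi(\phi_t(x))$: the system \eqref{e: ODE derivative assump} holds with $C_1=C_1(R_\#)$ and $C_2$ universal (using Lemmas~\ref{l: geometry of level set near tip} and~\ref{l: curvature upper bound initial} together with \eqref{e: quarter}), and since $H(0)/h(0)\ge C_0$, choosing $C_0$ large, \cite[Lemma 3.37]{Lai2022_O(2)} gives $H(t)\ge 4h(t)$ for all $t\ge 0$ and
\begin{equation*}
    \varphi(\phi_t(x))=h(t)\le h(0)\,e^{2C_1C_2/C_0}\le 2R_0^{-1/2}(1+\epsilon)\qquad\text{for all }t\ge0.
\end{equation*}

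\emph{Back to $R$ at infinity.} As $t\rii$ we have $\phi_t(x)\rii$ (since $H(t)\rii$), and the nearest point $q_t=\Gamma(\sigma(t))\in\Gamma$ to $\phi_t(x)$ satisfies $\sigma(t)\rii$ because $f(q_t)\le f(\phi_t(x))\rii$ (cf.\ \cite[Lemma 3.29, Lemma 3.19]{Lai2022_O(2)}); hence $R(q_t)\ri R_\infty$. By the rough bound $R(q_t)\ge C^{-1}(R_\#)>0$ uniformly, so Lemma~\ref{l: looks like RxCigar} applies at $q_t$ for $t$ large and shows that at scale $R(q_t)^{-1/2}$ a neighborhood of $q_t$ is close to $\R\times\cigar$. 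Let $\gamma_t$ be the minimizing geodesic from $q_t$ to $\phi_t(x)$; it leaves $\Gamma$ orthogonally and may be taken inside $N$. Along $\gamma_t$, the function $\varphi\circ\gamma_t$ starts at $0$ with unit speed and reaches $(2-o(1))R(q_t)^{-1/2}$ at $g$-distance $\sim R(q_t)^{-1/2}$ from $\Gamma$ (cigar picture), while it is concave, since on $N\setminus\Gamma$ the warped-product Ricci formula gives $\nabla_N^2\varphi(v,v)=-\varphi\,K_M(v,\partial_\theta)\le 0$. Combining concavity with the fact that $\phi_t(x)=\gamma_t(H(t))$ with $H(t)\rii$, and with the distance distortion estimates of \cite[\S3]{Lai2022_O(2)} (which control the behaviour of $\gamma_t$ beyond the cigar region and prevent $\varphi\circ\gamma_t$ from turning around), one obtains $R(q_t)^{1/2}\varphi(\phi_t(x))\ge 2-o(1)$ as $t\rii$. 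Together with the previous bound this gives $R(q_t)\ge R_0(1+\epsilon)^{-2}(1-o(1))$, and letting $t\rii$ then shrinking $\epsilon$ yields $R_\infty\ge R_0(1-\epsilon)$.

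\emph{Main obstacle.} The ODE step and the reductions are routine (the former modeled on Lemma~\ref{l: a rough lower bound C}). The crux is the inequality $R(q_t)^{1/2}\varphi(\phi_t(x))\ge 2-o(1)$: it asserts that the warping function along the flow line keeps up with the cigar–neck width at the current height $\sigma(t)$, even though $\phi_t(x)$ sits many cigar–scales away from $\Gamma$ — outside the region in which $M$ is quantitatively $\R\times\cigar$–like near $q_t$. Making this precise requires the concavity of $\varphi$ along geodesics of $N$ together with the quantitative distance distortion estimates of \cite{Lai2022_O(2)} to rule out that $\varphi\circ\gamma_t$ dips back down between the cigar region and $\phi_t(x)$, and careful bookkeeping of the $\epsilon$'s and $\delta$'s so that all constants stay uniform over $R_0\in[R_\#,1]$.
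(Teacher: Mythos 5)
Your reductions, the choice of the point $x$ near $\Gamma(s_0)$ with $\varphi(x)\approx 2R_0^{-1/2}$, and the distortion estimate giving $\varphi(\phi_t(x))\le(1+\epsilon)\varphi(x)$ for all $t\ge 0$ all match the paper's proof in substance (the paper obtains the same upper bound slightly more directly from $d_g(\phi_t(x),\Gamma)\ge d_g(x,\Gamma)+C^{-1}t$, Lemma \ref{l: curvature upper bound initial} and \eqref{e: quarter}, rather than rerunning the ODE system of Lemma \ref{l: a rough lower bound C}, but either route works). The genuine gap is exactly where you flag it: the inequality $R(q_t)^{1/2}\varphi(\phi_t(x))\ge 2-o(1)$ is never established, and the tools you name do not obviously establish it. Concavity of $\varphi$ on $N$ controls $\varphi\circ\gamma_t$ from \emph{above} beyond the cigar-scale region, not from below: a concave function vanishing at $q_t$ and reaching $\approx 2R(q_t)^{-1/2}$ at distance $\sim\epsilon^{-1}R(q_t)^{-1/2}$ from $\Gamma$ is perfectly free to decrease afterwards, and $\phi_t(x)$ sits at distance $H(t)\rii$, far outside the region where Lemma \ref{l: looks like RxCigar} gives any pointwise control near $q_t$. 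The ``distance distortion estimates'' invoked to rule out the dip are neither identified nor applied, so the crux of the lower bound is missing. (A smaller slip: $f(q_t)\le f(\phi_t(x))$ is an \emph{upper} bound on $f(q_t)$ and does not by itself force $q_t\rii$; the paper cites \cite[Lemma 3.19]{Lai2022_O(2)} for that fact.)

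The paper closes this step by a different and softer mechanism that avoids your obstacle entirely: it introduces a \emph{second} reference point $x_2$ near $\Gamma(s)$ for $s$ very large, with $\varphi(x_2)\approx 2R_\infty^{-1/2}$, shows by the same distortion estimate that both $\varphi(\phi_t(x_1))$ and $\varphi(\phi_t(x_2))$ stay within a factor $1+\epsilon$ of their initial values for all $t\ge0$, and then combines Lemma \ref{l: two points stay bounded} with the asymptotic $\RR\times S^1$ structure from \cite[Theorem 1.5]{Lai2022_O(2)}: for large $t$ the two flow images lie at bounded mutual distance inside a region $\epsilon$-close to the flat $\RR\times S^1$, where $\varphi$ is nearly constant over bounded distances, whence $\varphi(\phi_t(x_1))\approx\varphi(\phi_t(x_2))$ and therefore $2R_0^{-1/2}\approx 2R_\infty^{-1/2}$, i.e.\ $R_0(1-\epsilon)\le R_\infty\le R_0(1+\epsilon)$. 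This two-point comparison never needs to track $\varphi$ along the transversal geodesic from $\Gamma$ out to $\phi_t(x)$, which is precisely the step your argument cannot complete; you should either adopt it or supply a genuine proof that $\varphi\circ\gamma_t$ does not turn around.
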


\begin{proof}
Let $\delta>0$ be a constant that we shall take arbitrarily small, and $\epsilon>0$ be all constants so that $\epsilon\ri0$ as $\delta\ri0$. Let $D,C>0$ denote all constants depending on $\R_{\#}$ and $\delta$.

Let $R_{\infty}:=\lim_{s\rii}R(\Gamma(s))=\lim_{s\ri-\infty}R(\Gamma(s))$.
First, by Lemma \ref{l: a rough lower bound C} we see that $R_{\infty}\ge C^{-1}>0$.
So by Lemma \ref{l: looks like RxCigar} we may assume $D$ to be sufficiently large so that for any $|s|>D$, the manifold $(M, R(\Gamma(s))g,\Gamma(s))$ is $\delta$-close to $(\R\times\cigar,(0,x_{tip}))$.
So we can find two points $x_1,x_2\in M$ such that $d_g(x_1,\Gamma),d_g(x_2,\Gamma)\ge\epsilon^{-1}$, and
\begin{equation}\label{e: h(x) and R}
    |\varphi(x_1)-2(R_0)^{-1/2}|\le\epsilon,\quad\textit{and}\quad |\varphi(x_2)-2(R_{\infty})^{-1/2}|\le\epsilon.
\end{equation}

Next, by using $R_{\infty}\ge C^{-1}$ and \eqref{e: quarter} we can deduce $\frac{d}{dt}d_g(\phi_t(x_j),\Gamma)\ge C^{-1}$,
integrating which we have $d_g(\phi_t(x_j),\Gamma)\ge d_g(x_j,\Gamma)+C^{-1}t$, for $j=1,2$.
Combining this with Lemma \ref{l: curvature upper bound initial} (quadratic curvature decay), we obtain
\begin{equation}\label{e: R(phi_t(x_j))}
    R(\phi_t(x_j))\le\frac{C}{d_g^2(\phi_t(x_j),\Gamma)}\le\frac{C}{(C^{-1}t+d_g(x_j,\Gamma))^2}\le\frac{C}{(C^{-1}t+\epsilon^{-1})^2}.
\end{equation}
Since $2\pi\cdot\varphi(\phi_t(x))$ is equal to the $g$-length of the $S^1$-orbit at $\phi_t(x)$, which is equal to the $g(-t)$-length of the $S^1$-orbit at $x$, it hence follows by the Ricci flow equation and $\Rm\ge0$ that
\begin{equation*}
    0\le\frac{d}{dt}\varphi(\phi_t(x_j))\le C\,R(\phi_t(x_j))\,\varphi(\phi_t(x_j)),
\end{equation*}
integrating which and using \eqref{e: R(phi_t(x_j))} we obtain
\begin{equation}\label{e: compare h(x) and h(phi_t(x))}
    \varphi(x_j)\le\varphi(\phi_t(x_j))\le(1+\epsilon)\varphi(x_j).
\end{equation}

Since $d_g(\phi_t(x),\Gamma)\rii$, by \cite[Theorem 1.5]{Lai2022_O(2)} we see that 
the manifold is $\epsilon$-close to $\RR\times S^1$ at $\phi_t(x_1)$ for all sufficiently large $t$. So by Lemma \ref{l: two points stay bounded} it is easy to see
\begin{equation*}
    (1-\epsilon)\varphi(\phi_t(x_2))\le \varphi(\phi_t(x_1))\le(1+\epsilon)\varphi(\phi_t(x_2)).
\end{equation*}
Combining this with \eqref{e: compare h(x) and h(phi_t(x))} and \eqref{e: h(x) and R} we obtain
\begin{equation}\label{e: suffices}
    R_0(1-\epsilon)\le R_{\infty}\le R_0(1+\epsilon),
\end{equation}
and hence proves the lemma.
\end{proof}

\section{Proof of main results}

In this section we prove Theorem \ref{t: flying wing with prescribed angles} and \ref{t: theorem compactness}.

\begin{proof}[Proof of Theorem \ref{t: flying wing with prescribed angles}]
First, as in the proof of \cite[Theorem 1.1]{Lai2020_flying_wing}, 
we can find a sequence of smooth families of $\mathbb{Z}_2\times O(2)$-symmetric expanding gradient Ricci solitons $(M_{i,\mu},g_{i,\mu},p_{i,\mu}),\mu\in[0,1]$, $i\in\mathbb{N}$, with positive curvature operator \cite{De15}, which satisfies the following conditions,
\begin{enumerate}
    \item $R(p_{i,\mu})=1$ for all $i\in\mathbb{N}$ and $\mu\in\R$;
    \item $(M_{i,0},g_{i,0},p_{i,0})$ are rotationally symmetric for all $i$, and $(M_{i,0},g_{i,0},p_{i,0})$ smoothly converge to the Bryant soliton as $i\rii$;
    \item $(M_{i,1},g_{i,1},p_{i,1})$ smoothly converge to $\R\times\cigar$  as $i\rii$.
    \item For any sequence $\mu_i\in[0,1]$, a subsequence of $(M_{i,\mu_i},g_{i,\mu_i},p_{i,\mu_i})$ smoothly converges to a $\sy$-symmetric 3D steady gradient Ricci soliton on $\R^3$.
\end{enumerate}
By abuse of notation, we will use $\Gamma$ to denote the unit speed complete geodesic in any expanding gradient soliton $(M_{i,\mu},g_{i,\mu},p_{i,\mu})$ that is fixed by the $O(2)$-isometry.

For any $\theta\in(0,\pi)$, let $R_0=\sin^2\frac{\theta}{2}\in(0,1)$, we now construct a 3D flying wing $(M_{\infty},g_{\infty},p_{\infty})$ such that $R(p_{\infty})=1$ and $\lim_{s\rii}R(\Gamma(s))=\lim_{s\ri-\infty}R(\Gamma(s))=R_0$.
First,
by Proposition \ref{l: R does not change too much} we can choose a sequence of numbers $\{s_k\}_{k=0}^{\infty}$ so that $s_k\rii$ as $k\rii$, and if $R(\Gamma(s_j))\ge R_0$ holds in a $\sy$-symmetric 3D steady gradient Ricci soliton on $\R^3$, then the following will hold for all $s\ge s_j$,
\begin{equation}\label{e: choice of s_j}
    R(\Gamma(s))\ge (1+(j+1)^{-1})^{-1}R(\Gamma(s_j)).
\end{equation}

Since $\lim_{s\rii}R(\Gamma(s))=0$ in the Bryant soliton,
we may take $s_k$ to be sufficiently large so that $R(\Gamma(s_k))<R_0$ in the Bryant soliton. We also see that $R(\Gamma(s_k))=1>R_0$ in $\R\times\cigar$. So by condition (2)(3), we can find a $\mu_{i,k}\in(0,1)$ for each fixed $k$ and all sufficiently large $i$ so that $R_{g_{i,\mu_{i,k}}}(\Gamma(s_k))=R_0$.
By condition (1)(4), for each fixed $k$, we may assume after passing to a subsequence that the expanding gradient Ricci solitons $(M_{i,\mu_{i,k}},g_{i,\mu_{i,k}},p_{i,\mu_{i,k}})$ smoothly converge to a $\mathbb{Z}_2\times O(2)$-symmetric 3D flying wing
$(M_k,g_k,p_k)$, which satisfies $R_{g_{k}}(p_k)=1$ and $R_{g_{k}}(\Gamma(s_k))=R_0$.
So by the monotonicity of $R$ along $\Gamma$ we have $R_{g_k}(\Gamma(s))\ge R_0$, and hence by \eqref{e: choice of s_j} we obtain that for each $j=1,...,k$, and for all $s_{j-1}\le s\le s_k$,
\begin{equation*}
    R_0\le R_{g_k}(\Gamma(s))\le R_{g_k}(\Gamma(s_{j-1}))\le(1+j^{-1})R_{g_k}(\Gamma(s_k))=(1+j^{-1})R_0.
\end{equation*}

By Lemma \ref{compactness to a steady soliton} we may assume after passing to a subsequence that the 3D flying wings $(M_k,g_k,p_k)$ smoothly converge to a $\mathbb{Z}_2\times O(2)$-symmetric 3D flying wing $(M_{\infty},g_{\infty},p_{\infty})$, which satisfies $R_{g_{\infty}}(p_{\infty})=1$ and the following holds for all $j\in\mathbb{N}_+$,
\begin{equation*}
    R_0\le R_{g_{\infty}}(\Gamma(s))\le(1+j^{-1})R_0, \quad\textit{for all}\quad s\ge s_{j-1}.
\end{equation*}
In particular, this implies $\lim_{s\rii}R_{g_{\infty}}(\Gamma(s))=\lim_{s\ri-\infty}R_{g_{\infty}}(\Gamma(s))=R_0$.
So by Lemma \ref{l: quantitative relation} we see that $(M_{\infty},g_{\infty})$ is asymptotic to a sector with angle $\theta$.   
\end{proof}

Now we prove Theorem \ref{t: theorem compactness}.

\begin{proof}[Proof of Theorem \ref{t: theorem compactness}]
Let $(M_i,g_i,f_i,p_i)$ be a sequence of 3D steady gradient Ricci solitons whose asymptotic cone angles are $\alpha_i$ and $\lim_{i\rii}\alpha_i=\alpha$.
Then by Lemma \ref{compactness to a steady soliton}, any converging subsequence of $(M_i,g_i,f_i,p_i)$ converges to a 3D steady gradient Ricci soliton $(M,g,f,p)$ on $\R^3$.

First, assume $\alpha=0$. Suppose by contradiction that the asymptotic cone angle of $(M,g,f,p)$ is equal to some $\beta>0$, which by Lemma \ref{l: quantitative relation} implies $\lim_{s\rii}R(\Gamma(s))=\sin^2\frac{\beta}{2}>0$. So for any $s_0>0$, we have
\begin{equation*}
    \lim_{i\rii}R_{g_i}(\Gamma_i(s_0))=R(\Gamma(s_0))\ge\sin^2\frac{\beta}{2}.
\end{equation*}
So by Proposition \ref{l: R does not change too much} there exists $C>0$ such that  $\lim_{s\rii}R_{g_i}(\Gamma_i(s))\ge C^{-1}$ holds for all sufficiently large $i$. So by Lemma \ref{l: quantitative relation} we have $\liminf_{i\rii}\alpha_i>0$, which contradicts the assumption $\alpha=0$.
Therefore, we have $\beta=0$.
Moreover, by \cite[Theorem 1.1]{Lai2022_O(2)}, it follows that $(M,g,f,p)$ is isometric to the Bryant soliton.

So we may assume $\alpha>0$. Then by Lemma \ref{l: quantitative relation} we have $\lim_{i\rii}\lim_{s\rii}R_{g_i}(\Gamma_i(s))=\sin^2\frac{\alpha}{2}$.
Therefore, by applying Proposition \ref{l: R does not change too much} in each $(M_i,g_i,f_i,p_i)$ we see that for any $\epsilon>0$, there exists $s_0>0$ and $N\in\mathbb{N}$ such that for all $s\ge s_0$ and $i\ge N$, we have
\begin{equation*}
    \left|R_{g_i}(\Gamma_i(s))-\sin^2\frac{\alpha}{2}\right|\le\epsilon.
\end{equation*}
Passing this to the limit we obtain
\begin{equation*}
    |R(\Gamma(s))-\sin^2\frac{\alpha}{2}|\le\epsilon
\end{equation*}
for all $s\ge s_0$ in $(M,g,f,p)$. Letting $\epsilon\ri0$, we get $\lim_{s\rii}R(\Gamma(s))=\sin^2\frac{\alpha}{2}$, which proves the theorem by Lemma \ref{l: quantitative relation}.

\end{proof}

\bibliography{bib}
\bibliographystyle{abbrv}

\end{document}